\documentclass[12pt,oneside]{amsart}

\usepackage[utf8]{inputenc}
\usepackage[english]{babel}
\usepackage[margin=3cm]{geometry} \linespread{1.1}
\usepackage{amsmath,amsthm,amssymb}
\usepackage{mathtools}
\usepackage{graphicx}
\usepackage[colorlinks=true, allcolors=blue]{hyperref}
\usepackage{mathrsfs}
\usepackage{tikz-cd,adjustbox}
\usepackage{spverbatim,comment}
\usepackage[shortlabels]{enumitem}

\usepackage[alphabetic,initials]{amsrefs}

\theoremstyle{plain}
\newtheorem{theorem}{Theorem}[section]
\newtheorem{lemma}[theorem]{Lemma}
\newtheorem{proposition}[theorem]{Proposition}
\newtheorem{corollary}[theorem]{Corollary}


\theoremstyle{definition}
\newtheorem{definition}[theorem]{Definition}

\theoremstyle{remark}
\newtheorem{remark}[theorem]{Remark}

\newcommand{\Rom}[1]{\uppercase\expandafter{\romannumeral#1}}

\newcommand{\CC}{\mathbb C}





\DeclareMathOperator{\Hom}{Hom}




\DeclareMathOperator{\sing}{sing}

\DeclareMathOperator{\Spec}{Spec}

\DeclareMathOperator{\codim}{codim}

\DeclareMathOperator{\ev}{ev}
\DeclareMathOperator{\Res}{Res}
\DeclareMathOperator{\tdeg}{tdeg}

 

\DeclarePairedDelimiter\abs{\lvert}{\rvert}

\makeatletter
\let\oldabs\abs
\def\abs{\@ifstar{\oldabs}{\oldabs*}}
\makeatother

\theoremstyle{definition} 
\newcommand{\thistheoremname}{}
\newtheorem*{genericthm}{\thistheoremname}


\newcommand{\Addresses}{{
  \vspace{\bigskipamount}
  \footnotesize
  \textsc{Department of Mathematics, Harvard University, 1 Oxford Street, Cambridge, MA 02138, USA}\par\nopagebreak
  \textit{E-mail address}: \texttt{wshen@math.harvard.edu}

  \vspace{\bigskipamount}

  \textsc{Department of Mathematics, University of Michigan, 530 Church Street, Ann Arbor, MI 48109, USA}\par\nopagebreak
  \textit{E-mail address}: 
  \texttt{srivenk@umich.edu}
  
  \vspace{\bigskipamount}
  
  \textsc{Department of Mathematics, Harvard University, 1 Oxford Street, Cambridge, MA 02138, USA}\par\nopagebreak
  \textit{E-mail address}: \texttt{ducvo@math.harvard.edu}
}}

\title{Local vanishing for toric varieties}
\author{Wanchun Shen}
\author{Sridhar Venkatesh}
\thanks{S.V. was partially supported by NSF grant DMS-2001132.}
\author{Anh Duc Vo}

\begin{document}

\subjclass[2020]{14M25, 14F17, 14B05}

\maketitle

\begin{abstract}
Let $X$ be a toric variety. We establish vanishing (and non-vanishing) results for the sheaves $R^if_*\Omega^p_{\tilde X}(\log E)$, where $f: \tilde{X} \to X$ is a strong log resolution of singularities with reduced exceptional divisor $E$. These extend the local vanishing theorem for toric varieties in \cite{mustata-olano-popa}. Our consideration of these sheaves is motivated by the notion of $k$-rational singularities introduced by Friedman and Laza \cite{FL-Saito}. In particular, our results lead to criteria for toric varieties to have $k$-rational singularities, as defined in \cite{k-rational-paper}.
\end{abstract}

\section{Introduction}

Our main goal in this paper is to prove a local vanishing theorem for toric varieties. Let $X$ be an affine toric variety over $\mathbb{C}$, and $f: \tilde X \to X$ be a \textit{strong log resolution} with reduced exceptional divisor $E$, i.e. $f$ is a log resolution of singularities of $X$ which is an isomorphism over the smooth locus of $X$. When $X$ is simplicial, the vanishing of the sheaves $R^if_*\Omega^p_{\tilde X}(\log E)$ is closely related to the codimension of the singular locus of $X$. The situation is different for non-simplicial $X$, where we have non-vanishing at the first level itself. More precisely, we prove the following result.

\begin{theorem}\label{theorem:k-rational-for-toric-varieties}
     Let $X$ be an affine toric variety over $\mathbb{C}$ and $c$ be the codimension of the singular locus of $X$. Let $f: \tilde{X} \to X$ be a strong log resolution with reduced exceptional divisor $E$.
    \begin{enumerate}
    \item If $X$ is simplicial, then
    \begin{itemize}
        \item For $p<c$, we have
        \[ R^if_*\Omega^p_{\tilde X}(\log E) = 0 \quad \text{for} \quad i>0. \]
        \item For $p\geq c$, we have
        \begin{align*}   
        R^if_*\Omega^p_{\tilde X}(\log E) &= 0 \quad \text{for} \quad 0< i<c-1 \text{ or } i>p-1,\\
        R^{c-1}f_*\Omega^p_{\tilde X}(\log E) &\neq 0.
        \end{align*}
    \end{itemize}
    \item If $X $ is non-simplicial, then we have
    \begin{align*}
        R^1f_*\Omega^1_{\tilde X}(\log E) &\neq 0,\\
        R^if_*\Omega^1_{\tilde X}(\log E) &= 0 \quad \text{for} \quad i>1.
    \end{align*}
\end{enumerate}
\end{theorem}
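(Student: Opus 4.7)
The plan is to reduce to a toric log resolution and then exploit the $T$-equivariant structure. Write $X = U_\sigma$ for a strongly convex rational polyhedral cone $\sigma$ with dual lattice $M$. Choose a smooth projective subdivision $\tilde\Sigma$ of $\sigma$ that leaves the smooth subfan unchanged; the induced toric morphism $f: \tilde X \to X$ is a strong log resolution with reduced exceptional divisor $E = \sum_{\rho \in \tilde\Sigma(1) \setminus \sigma(1)} D_\rho$. Any two log resolutions are dominated by a common refinement with reduced exceptional divisor preserved, and $R^i f_*\Omega^p(\log E)$ is insensitive to such refinements, so this reduction is harmless.

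The main technical step is a combinatorial formula for $R^i f_* \Omega^p_{\tilde X}(\log E)$. On $\tilde X$ the sheaf $\Omega^p_{\tilde X}(\log \tilde B)$, where $\tilde B$ is the full toric boundary, is a trivial bundle of rank $\binom{n}{p}$. Iterating the residue short exact sequence along the components of $B_{\mathrm{old}} := \tilde B - E$ (the strict transforms of the torus-invariant divisors of $X$) produces a Koszul-type resolution of $\Omega^p_{\tilde X}(\log E)$ whose pushforward is tractable weight-by-weight. Using the $T$-action, each $R^i f_* \Omega^p_{\tilde X}(\log E)$ is $M$-graded, and the $m$-th weight space should be identified with the $i$-th cohomology of an explicit complex indexed by the cones of $\tilde\Sigma$, with terms involving exterior powers of certain sublattices of $M \otimes \CC$, restricted to the closed subset of $\sigma$ cut out by $\langle m, \cdot\rangle \geq 0$.

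Granted this formula, the vanishing statements should follow from contractibility of the relevant stars in $\tilde\Sigma$ combined with the position of the non-smooth locus: when $X$ is simplicial, non-smooth cones occur in codimension at least $c$ in $\sigma$, bounding the degree of non-vanishing of the combinatorial complex; when $X$ is non-simplicial, an analogous argument gives $R^i f_* \Omega^1(\log E) = 0$ for $i \geq 2$, since $\Omega^1$ is only one residue step away from the trivial bundle $\Omega^1(\log \tilde B)$. The main obstacle I anticipate is the non-vanishing statements. For the simplicial case at $i = c - 1$, my plan is to reduce to the model case where $\sigma$ is simplicial of dimension exactly $c$ with lattice multiplicity greater than one, and exhibit an explicit non-vanishing class in a single weight via the combinatorial formula. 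For the non-simplicial case, I would reduce to a three-dimensional non-simplicial face of $\sigma$ (for instance the conifold cone), where a linear dependence among the primitive generators of the non-simplicial face yields an explicit non-zero cocycle in $R^1 f_*\Omega^1(\log E)$ that survives pushforward to $X$.
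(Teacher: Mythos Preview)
Your setup is the same as the paper's: reduce to a toric strong log resolution, build the residue (Koszul-type) resolution of $\Omega^p_{\tilde X}(\log E)$ along the strict transforms $D_1,\dots,D_k$ of the boundary of $X$, observe that its terms are $\pi_*$-acyclic, and decompose by $M$-weight. Where you diverge is in the analysis after pushforward, and there the proposal has real gaps.

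\textbf{Vanishing.} You propose to read off vanishing from ``contractibility of the relevant stars in $\tilde\Sigma$.'' The paper does not work in $\tilde\Sigma$ at all at this stage. Pushing forward the residue resolution gives a complex $C_p$ on $X$ whose terms are indexed by subsets of the rays of $\sigma$ (not cones of $\tilde\Sigma$): the term at $\{j_1,\dots,j_m\}$ is $\bigwedge^{p-m}(\tau_{j_1}^\perp\cap\cdots\cap\tau_{j_m}^\perp)\otimes \mathcal{O}_{F_{j_1}\cap\cdots\cap F_{j_m}}$, and it is present precisely when $F_{j_1}\cap\cdots\cap F_{j_m}\not\subset Z$. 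The key step you are missing is the comparison of $C_p$ with the \emph{Ishida complex} $S_p$ on $X$ (same shape, but with all subsets of rays present). When $\sigma$ is simplicial, a weight-by-weight Koszul argument shows $S_p$ is exact in positive degrees; the kernel $K_p=\ker(S_p\twoheadrightarrow C_p)$ lives in degrees $\ge c$ because any $F_{j_1}\cap\cdots\cap F_{j_m}\subset Z$ forces $m\ge c$. This immediately gives $H^i(C_p)=0$ for $0<i<c-1$ and $i>p-1$, and $H^i(C_p)=0$ for all $i>0$ when $p<c$. Your star-contractibility heuristic does not obviously produce these bounds, and as written it is not clear which simplicial complex you intend to take cohomology of.

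\textbf{Non-vanishing, simplicial case.} Your plan to ``reduce to the model case where $\sigma$ has dimension exactly $c$'' is not justified: a simplicial $\sigma$ does not split off a $c$-dimensional singular face as a product in general, so there is no evident functorial reduction. The paper instead stays with the given $\sigma$ and exhibits a single weight: choose $j_1,\dots,j_c$ with $F_{j_1}\cap\cdots\cap F_{j_c}\subset Z$ and $u\in\tau_{j_1}^\perp\cap\cdots\cap\tau_{j_c}^\perp$ lying in no further $\tau_l^\perp$; then $(K_p)^c_u\neq 0$ while $(K_p)^{c+1}_u=0$, so $H^c(K_p)\neq 0$, hence $R^{c-1}\pi_*\Omega^p_Y(\log E)\neq 0$.

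\textbf{Non-vanishing, non-simplicial case.} Your reduction to a three-dimensional non-simplicial face is actually wrong: a non-simplicial cone need not have any non-simplicial proper face (e.g.\ the cone over any simplicial polytope with more vertices than $\dim+1$, such as the cone over an octahedron in $\RR^4$). The paper's argument is a one-line rank count: at any point $x\in V(\sigma)$ the residue map
\[
M\otimes k(x)\longrightarrow \bigoplus_{j=1}^k \frac{M}{M\cap\tau_j^\perp}\otimes k(x)
\]
has source of dimension $\dim\langle\tau_1,\dots,\tau_k\rangle<k$ and target of dimension $k$, so it cannot be surjective; hence $R^1\pi_*\Omega^1_Y(\log E)\neq 0$. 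The vanishing $R^i\pi_*\Omega^1_Y(\log E)=0$ for $i>1$ follows because the residue resolution for $p=1$ has length one, exactly as you say.
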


\noindent Note that our result extends \cite[Theorem C]{mustata-olano-popa}, which states that:
\[ R^{\dim X -1}f_*\Omega^1_{\tilde X} (\log E) = 0 \] for $X$ a toric variety and  $f: \tilde{X} \to X$ a log resolution of singularities of $X$ with reduced exceptional divisor $E$ (see Remark \ref{rmk:extend-MOP}).



Another motivation for studying the sheaves $R^if_*\Omega^p_{\tilde X}(\log E)$ comes from the notion of $k$-rational singularities, introduced recently by Friedman and Laza \cite{FL-Saito}. This is a refinement of the classical notion of rational singularities and over the last few years, the development of Hodge theoretic methods has led to considerable interests in its study in the local complete intersection (lci) setting (see \cite{MOPW}, \cite{JKSY}, \cite{FL-Saito}, \cite{FL-isolated}, \cite{MP-lci}, \cite{CDM-k-rational}).
However, the current definition of $k$-rational singularities excludes most non-lci examples and need not be the right notion to consider in general, as explained in \cite{k-rational-paper}. In \textit{loc.cit.} we introduce a new definition of $k$-rational singularities and as a consequence of Theorem \ref{theorem:k-rational-for-toric-varieties}, we characterize when toric varieties have $k$-rational singularities in the new sense (\cite[Proposition E]{k-rational-paper}).



Theorem \ref{theorem:k-rational-for-toric-varieties} is actually a corollary of the following result where we restrict to the case of toric log resolutions of singularities of $X$, without assuming that the resolution is an isomorphism over the smooth locus of $X$.

\begin{theorem}\label{theorem:vanishing-in-terms-of-codim}
    Let $N$ be a free abelian group of rank $n$. Let $X$ be an affine toric variety associated to a cone $\sigma \subset N \otimes \mathbb{R}$. Let $Y$ be the toric variety associated to a fan $\Sigma$ which refines $\sigma$, such that the induced toric morphism $\pi:Y \to X$ is a log resolution of singularities with reduced exceptional divisor $E$. Let $Z$ be the complement of the domain of $\pi^{-1}$ and let $c:=\codim_X Z$.
\begin{enumerate}
    \item If $X$ is simplicial, then
    \begin{enumerate}
        \item For $p<c$, we have
        \[ R^i\pi_*\Omega^p_{Y}(\log E)  = 0 \quad \text{for} \quad i>0. \]
        \item For $p\geq c$, we have
        \begin{align*}   
        R^i\pi_*\Omega^p_{Y}(\log E) &= 0 \quad \text{for} \quad 0< i<c-1 \text{ or } i>p-1,\\
        R^{c-1}\pi_*\Omega^p_{Y}(\log E) &\neq 0.
        \end{align*}
    \end{enumerate}
    \item If $X $ is non-simplicial, then we have
    \begin{align*}
        R^1\pi_*\Omega^1_{Y}(\log E) &\neq 0,\\
        R^i\pi_*\Omega^1_{Y}(\log E) &= 0 \quad \text{for} \quad i>1.        
    \end{align*}
\end{enumerate}
\end{theorem}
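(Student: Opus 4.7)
The plan is to reduce the computation of $R^i\pi_*\Omega^p_Y(\log E)$ to a combinatorial problem on the fan $\Sigma$ by exploiting the torus-equivariance of $\Omega^p_Y(\log E)$. Since $X = \Spec A$ with $A = \CC[\sigma^\vee \cap M]$ is affine, each $R^i\pi_*\Omega^p_Y(\log E)$ is the sheafification of $H^i(Y, \Omega^p_Y(\log E))$, which decomposes under the $M$-grading induced by the torus action:
\[
H^i(Y, \Omega^p_Y(\log E)) \;=\; \bigoplus_{m \in M} H^i(Y, \Omega^p_Y(\log E))_m.
\]
We will identify each weight piece with the reduced cohomology of an explicit subcomplex of the order complex of $\Sigma$, and then translate the vanishing and non-vanishing assertions into combinatorial properties.

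To set this up, recall that since $Y$ is a smooth toric variety, $\Omega^p_Y(\log \overline{E}) \cong \wedge^p M \otimes_\ZZ \O_Y$, where $\overline{E}$ denotes the full toric boundary. The subsheaf $\Omega^p_Y(\log E) \subseteq \Omega^p_Y(\log \overline{E})$ is cut out on each affine chart $U_\tau$ by the requirement that sections have no pole along the non-exceptional divisors $D_\rho$ for $\rho \in \tau(1) \cap \sigma(1)$. Restricting to the $m$-weight space yields, for each cone $\tau \in \Sigma$, an explicit subspace of $\wedge^p M_\CC$ whose non-vanishing depends on $m$, on $\tau$, and on which rays of $\tau$ belong to $\sigma(1)$. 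Computing Čech cohomology with respect to the cover $\{U_\tau\}_{\tau \in \Sigma_{\max}}$ then identifies $H^i(Y, \Omega^p_Y(\log E))_m$ with the $i$th reduced cohomology of a subcomplex $K_m \subseteq \Sigma$ recording which cones $\tau$ contribute in weight $m$.

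The proof then splits into two cases. When $X$ is simplicial, the hypothesis $c = \codim_X Z$ is equivalent to saying that every face of $\sigma$ of dimension less than $c$ is smooth and hence left unrefined by $\Sigma$. Analyzing $K_m$ under this constraint shows that $K_m$ is contractible for $p < c$, giving $R^i\pi_*\Omega^p_Y(\log E) = 0$ for $i > 0$; and for $p \geq c$, the complex $K_m$ is homotopy equivalent to a wedge of spheres concentrated in degrees in $[c-1, p-1]$, with a specific weight $m$ realizing a non-trivial class in degree $c-1$. When $X$ is non-simplicial, a Čech 1-cocycle built from any non-trivial linear relation among the ray generators of $\sigma$ gives the non-vanishing of $R^1\pi_*\Omega^1_Y(\log E)$, while the vanishing for $i \geq 2$ follows from a dimension count specific to 1-forms.

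The main obstacle is the combinatorial analysis of $K_m$ in the simplicial case: isolating the weight $m$ that realizes non-vanishing at $i = c-1$, and proving the sharp upper bound $i \leq p-1$, which does not follow from the trivial estimate $i < \dim Y$ and requires a careful accounting of how $p$-forms interact with the residue conditions along the non-exceptional rays.
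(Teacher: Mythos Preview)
Your approach is genuinely different from the paper's. The paper does not compute \v{C}ech cohomology of $\Omega^p_Y(\log E)$ directly. Instead it constructs, via iterated residues, an explicit $\pi_*$-acyclic resolution
\[
\widetilde{C_p}: \quad 0 \to \Omega^p_Y(\log E) \to \Omega^p_Y(\log D) \to \bigoplus_j \Omega^{p-1}_{D_j}(\log T_j) \to \cdots
\]
whose terms are free sheaves on toric strata. Pushing forward gives a complex $C_p$ on $X$ with $R^i\pi_*\Omega^p_Y(\log E)=H^i(C_p)$. In the simplicial case the paper compares $C_p$ with the Ishida complex $S_p$ (which they independently show has vanishing higher cohomology via a Koszul argument): the kernel $K_p=\ker(S_p\twoheadrightarrow C_p)$ lives in cohomological degrees $\geq c$, which immediately gives all the vanishing, and a single $M$-degree computation shows $H^c(K_p)\neq 0$. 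No simplicial-complex homotopy type is ever analyzed.

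Your proposal, by contrast, has a real gap at its central step. You assert that the weight-$m$ \v{C}ech complex computes the reduced cohomology of a subcomplex $K_m\subseteq\Sigma$ with \emph{constant} coefficients. That is correct for $\Omega^p_Y(\log D)\cong\wedge^p M\otimes\O_Y$, but not obviously for $\Omega^p_Y(\log E)$: on $U_\tau$ in weight $m\in\tau^\vee$, the sections of $\Omega^p_Y(\log E)$ are cut out of $\wedge^p M_\CC$ by the residue conditions along exactly those non-exceptional rays $\rho\in\tau(1)\cap\sigma(1)$ with $\langle m,\rho\rangle=0$. This subspace depends on $\tau$, not just on whether $m\in\tau^\vee$, so the \v{C}ech complex in weight $m$ is a complex with \emph{varying} coefficients, and your reduction to a single simplicial complex $K_m$ is not justified as stated. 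One can try to filter by the set of non-exceptional rays orthogonal to $m$ and peel off constant-coefficient pieces, but this is exactly where the work lies, and nothing in your outline indicates how it goes.

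Even granting the setup, the assertions that $K_m$ is contractible for $p<c$ and is a wedge of spheres in degrees $[c-1,p-1]$ for $p\geq c$ are the entire content of the theorem and are stated without argument; you acknowledge this yourself as ``the main obstacle.'' The paper's Ishida-complex comparison sidesteps all of this: the bound $i\leq p-1$ falls out because $K_p$ is zero in degrees $>p$ by construction, and the bound $i\geq c-1$ because $(K_p)^m=0$ for $m<c$ by a direct codimension count. If you want to pursue the \v{C}ech route, you will need to either handle the non-constant coefficient system explicitly or reproduce the residue resolution in disguise.
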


\noindent Theorem \ref{theorem:k-rational-for-toric-varieties} can be easily deduced from Theorem \ref{theorem:vanishing-in-terms-of-codim} by taking the toric morphism $\pi$ to be a strong log resolution of singularities with reduced exceptional divisor $E$ (such a $\pi$ exists by \cite[Theorem 11.2.2]{cox-little-schenk:toric-varieties}), and then using \cite[Lemma 1.6]{MP-lci} which says that the sheaves $R^if_*\Omega^p_{\tilde X}(\log E)$ only depend on $X$ if $f$ is a strong log resolution.

The proof of Theorem \ref{theorem:vanishing-in-terms-of-codim} relies on the existence of a $\pi_*$-acyclic resolution of $\Omega^p_{Y}(\log E)$, which will help us compute $R^i\pi_*\Omega^p_Y(\log E)$. This resolution is analogous to the Ishida complex in \cite[Lemma 3.5]{Oda} (see also \cite{Ishida}), which is a resolution of $\Omega^p_{Y}$. While we can prove the existence of such a resolution of $\Omega^p_{Y}(\log E)$ by suitably modifying the proof of \cite[Theorem 3.6]{Oda}, we instead give an alternate self-contained proof using Koszul complexes.

\textbf{Outline of the paper.} We recall some basic facts about toric varieties and about log differentials in Section \ref{section:preliminaries}. In Section \ref{section:proof-of-(2)}, we prove (2) in Theorem \ref{theorem:vanishing-in-terms-of-codim} since it contains the main idea of the proof in an easier setting. Next, we study certain quotient complexes of a Koszul complex in Section \ref{section:koszul-complexes}, which will help us obtain a $\pi_*$-acyclic resolution of $\Omega^p_{Y}(\log E)$ in Section \ref{section:complexes-on-toric-varieties}. Finally, we finish the proof of Theorem \ref{theorem:vanishing-in-terms-of-codim} in Section \ref{section:proof-of-(1)}.

\textbf{Acknowledgements.} We would like to express our sincere gratitude to Mircea Musta\c{t}\u{a} and Mihnea Popa for their constant support during the preparation of this paper.


\section{Preliminaries}\label{section:preliminaries}

Throughout the paper, by a variety we mean an integral scheme of finite type over $\CC$. For a variety $X$, we denote its singular locus by $X_{\sing}$.

\subsection{Remarks about toric varieties}

Fix a free abelian group $N$ of rank $n$ and let $M := \Hom_{\mathbb{Z}}(N,\mathbb{Z})$. To a strongly convex rational polyhedral cone $\sigma \subset N \otimes \mathbb{R}$, we associate an $n$-dimensional affine toric variety $X_\sigma$. More generally, to a fan $\Delta \subset N \otimes \mathbb{R}$, we associate an $n$-dimensional toric variety $X_\Delta$. For general notions regarding toric varieties, we refer to \cite{fulton:toric-varieties} and \cite{cox-little-schenk:toric-varieties}.


We gather some basic facts about toric varieties in the next remark.
\begin{remark}\label{remark:general-toric-varieties}
    Let $X_\Delta$ be a toric variety with its associated fan $\Delta \subset N \otimes \mathbb{R}$.
    \begin{enumerate}[label=(\alph*)]
        \item $X_\Delta$ is normal (\cite[Theorem 1.3.5]{cox-little-schenk:toric-varieties}). In fact, $X_\Delta$ has rational singularities (\cite[Theorem 11.4.2]{cox-little-schenk:toric-varieties}).
        \item Given any cone $\tau \subset \Delta$, we get an inclusion of the associated affine toric variety $X_\tau \hookrightarrow X_\Delta$ as an open subset. We denote this affine open subset by $U_\tau \subset X_\Delta$. 
        \item Following \cite[Section 3.1]{fulton:toric-varieties}, given any $r$ dimensional cone $\tau \subset \Delta$, we get a corresponding torus invariant subvariety $V(\tau) \subset X_\Delta$ of codimension $r$. The lattice and dual lattice of $V(\tau)$ are given by:
    \[ N(\tau):= \frac{N}{N \cap \tau}, \quad M(\tau):= M \cap \tau^\perp.  \]
    \item If $\tau$ is a $1$-dimensional cone, and $v \in N \cap \tau$ is a generator of $N \cap \tau$, then we have an isomorphism:
    \begin{equation}\label{equation:evaluation-residue-map}
    \begin{split}
        \ev_v : \frac{M}{M \cap \tau^\perp} &\xrightarrow{\sim}\mathbb{Z}\\
        \overline{f} &\mapsto f(v).
    \end{split}
    \end{equation}
    \item Given a cone $\sigma \subset N \otimes \mathbb{R}$, the affine toric variety $X_\sigma = \Spec \mathbb{C}[\sigma^\vee \cap M]$ carries an action of the torus $T = \Spec \mathbb{C}[M]$. The ring $\mathbb{C}[\sigma^\vee \cap M]$ decomposes into $T$-eigenspaces
    \[ \mathbb{C}[\sigma^\vee \cap M] = \bigoplus_{u \in \sigma^\vee \cap M} \mathbb{C} \cdot \chi^u, \]
    and so it carries an $M$-grading. Similarly, for any torus invariant subvariety $V(\tau) = \Spec \mathbb{C}[\sigma^\vee \cap M \cap \tau^\perp]$ of $X_{\sigma}$, the ring $\mathbb{C}[\sigma^\vee \cap M \cap \tau^\perp]$ decomposes into $T$-eigenspaces 
    \[ \mathbb{C}[\sigma^\vee \cap M \cap \tau^\perp] = \bigoplus_{u \in \sigma^\vee \cap M \cap \tau^\perp} \mathbb{C} \cdot \chi^u, \]
    and so it carries an $M$-grading as well.
    \end{enumerate}
\end{remark}


In the next remark, we make some observations about simplicial toric varieties and their resolutions.
\begin{remark}\label{remark:intersection-of-strict-transforms-toric-varieties}
    Let $X:=X_\sigma$ be an affine toric variety such that its associated cone $\sigma \subset N \otimes \mathbb{R}$ is simplicial. Let $\tau_1,\dots,\tau_m$ be a subset of the rays of $\sigma$ and let $F_1,\dots,F_m$ be the associated torus invariant divisors on $X$. Let $\tau$ denote the face of $\sigma$ spanned by $\tau_1,\dots,\tau_m$. Let $U_\tau$ denote the associated affine open subset of $X$.

\noindent    Since $\sigma$ is simplicial, observe that the torus invariant subvariety $V(\tau)$ of $X$ corresponding to the face $\tau$ is just $F_1 \cap \dots \cap F_m$, the scheme theoretic intersection of the $F_i$'s. This is because we have $\tau^\perp = \tau_1^\perp \cap \dots \cap \tau_m^\perp$, hence:
    \begin{align*}
        V(\tau) &= \Spec \mathbb{C}[\sigma^\vee \cap M \cap \tau^\perp]\\
        &= \Spec \frac{\mathbb{C}[\sigma^\vee \cap M]}{(\chi^u \mid u \in \sigma^\vee \cap M, u \notin \tau^\perp)}\\
        &= \Spec \frac{\mathbb{C}[\sigma^\vee \cap M]}{(\chi^u \mid u \in \sigma^\vee \cap M, u \notin \tau_1^\perp) + \dots + (\chi^u \mid u \in \sigma^\vee \cap M, u \notin \tau_m^\perp)}\\
        &= \Spec \frac{\mathbb{C}[\sigma^\vee \cap M]}{(\chi^u \mid u \in \sigma^\vee \cap M, u \notin \tau_1^\perp)} \cap \dots \cap \Spec \frac{\mathbb{C}[\sigma^\vee \cap M]}{(\chi^u \mid u \in \sigma^\vee \cap M, u \notin \tau_m^\perp)}\\
        &= F_1 \cap \dots \cap F_m
    \end{align*}
    where we use the notation as in \cite[Section 1.3]{fulton:toric-varieties} and \cite[Section 3.1]{fulton:toric-varieties}.

    Now, let $\Sigma$ be a smooth fan refining $\sigma$ and let $Y:=X_\Sigma$ be the associated smooth toric variety. Let $\pi:Y \to X$ denote the associated projective birational morphism. Since $\Sigma$ is a refinement of $\sigma$, we get that $\tau_1,\dots,\tau_m$ are $1$-dimensional cones in $\Sigma$. Let $D_1,\dots,D_m$ be the corresponding torus invariant divisors in $Y$. Observe that $D_1,\dots,D_m$ are the strict transforms of $F_1,\dots,F_m$.
    
\noindent    We have that either $\tau$ is a cone in $\Sigma$ or it is not.
    \begin{enumerate}
        \item If $\tau$ is a cone in $\Sigma$, this means that $U_\tau$ is also an open subset of $Y$ and that $\pi|_{U_\tau}$ is an isomorphism. Since $D_{1} \cap \dots \cap D_m \cap U_\tau \neq \emptyset$, we have that $\pi :D_{1} \cap \dots \cap D_m \to F_{1} \cap \dots \cap F_m$ is a projective birational morphism from the smooth variety $D_{1} \cap \dots \cap D_m$. In particular, this implies that
        \begin{align*}
            \pi_*\mathcal{O}_{D_{1} \cap \dots \cap D_m} = \mathcal{O}_{F_{1} \cap \dots \cap F_m}, \quad R^i\pi_*\mathcal{O}_{D_{1} \cap \dots \cap D_m} = 0 \text{ for all $i>0$}
        \end{align*}
        since $F_{1} \cap \dots \cap F_m$ is a toric variety, so it is normal and has rational singularities (see Remark \ref{remark:general-toric-varieties}(a)).
        \item If $\tau$ is not a cone in $\Sigma$, then there is no cone in $\Sigma$ containing $\tau_{1},\dots,\tau_m$. Hence, $D_{1} \cap \dots \cap D_m = \emptyset$.
    \end{enumerate}
    To summarize, either $\pi(D_{j_1} \cap \dots \cap D_{j_m}) = F_{j_1} \cap \dots \cap F_{j_m}$ or $D_{j_1} \cap \dots \cap D_{j_m} = \emptyset$.
\end{remark} 

\subsection{Log differentials and residues }
In this subsection, we discuss a particular sequence of log differentials on a smooth variety. This sequence will be crucial to our proof later.
\begin{remark}\label{remark:residue-sequence-smooth-varieties}
    Let $Y$ be a smooth variety and let $D= D_1 + \dots+ D_k + D_{k+1} + \dots +  D_r$ be a simple normal crossing divisor on $Y$.  Let $E:= D_{k+1} + \dots + D_r$. For any $m \leq r$, define:
    \[ T_{j_1,\dots,j_m} := (D - D_{j_1} - \dots - D_{j_m})|_{D_{j_1}\cap \dots \cap D_{j_m}}. \]
    This is a simple normal crossing divisor on $D_{j_1}\cap \dots \cap D_{j_m}$. For any $p \leq n$ and for any $j$, we have the natural residue map:
    \begin{align*}
        \Res_p: \Omega^p_Y(\log D) &\to \Omega^{p-1}_{D_j} (\log T_j).
    \end{align*}
    Observe that for any $j' \neq j$, we also have the residue map:
    \begin{align*}
        \Res_{p-1}: \Omega^{p-1}_{D_j} (\log T_j) \to \Omega^{p-2}_{D_j \cap D_{j'}} (\log T_{j,j'}).
    \end{align*}
    We continue this sequence, while taking all the $D_j$'s together for $j=1,\dots,k$, to get:
    \begin{equation}\label{equation:smooth-varieties-residue-sequence}
     \begin{split}
         \widetilde{C_p} : \quad 0 \to \Omega^p_Y(\log D) \xrightarrow{\Res_p} \bigoplus_{j=1}^k \Omega^{p-1}_{D_j} (\log T_j) \xrightarrow{\Res_{p-1}} \bigoplus_{1 \leq j_1<j_2 \leq k} \Omega^{p-2}_{D_{j_1} \cap D_{j_2}}(\log T_{j_1,j_2}) \to \dots\\
        \dots \to \bigoplus_{1 \leq j_1 < \dots < j_p \leq k} \mathcal{O}_{D_{j_1}\cap \dots \cap D_{j_p}} \to 0 
     \end{split}
    \end{equation}
    where $\Omega^p_Y(\log D)$ is in cohomological degree $0$ (we follow the convention that if $D_{j_1}\cap \dots \cap D_{j_m} = \emptyset$, then we set $\Omega^{p-m}_{D_{j_1}\cap \dots \cap D_{j_m}}(\log T_{j_1,\dots,j_m}) = 0$). We will prove later in Corollary \ref{corollary:resolutioin-for-OmegapY(log E)} that $\widetilde{C_p}$ is a resolution of $\Omega^p_Y(\log E)$.
\end{remark}

We now discuss log differentials on smooth toric varieties.
\begin{remark}\label{remark:trivialization-of-log-differentials-for-toric-varieties}
    Let $Y:=X_\Delta$ be a smooth toric variety with its associated fan $\Delta \subset N \otimes \mathbb{R}$. Let $\tau_1,\dots,\tau_r$ be all the 1-dimensional cones in $\Delta$ and let $D_1,\dots, D_r$ be the corresponding torus invariant divisors on $Y$. Denote $D = D_1 + \dots + D_r$.
    \begin{enumerate}
        \item By \cite[Section 4.3]{fulton:toric-varieties}, we have the isomorphism:
\begin{align*}
    M \otimes \mathcal{O}_Y \cong \Omega^1_Y(\log D).
\end{align*}
Observe that this isomorphism behaves well with respect to the residue map i.e. we have the following commuting diagram for any $i$:
\[
\begin{tikzcd}
    M \otimes \mathcal{O}_Y \ar[r] \ar[d, swap, "\varphi \otimes (\cdot)|_{D_i}"] & \Omega^1_Y(\log D) \ar[d, "\Res_i"]\\
    \frac{M}{M \cap \tau_i^\perp} \otimes \mathcal{O}_{D_i} \ar[r, "\ev_i"] & \mathcal{O}_{D_i}
\end{tikzcd}
\]
where $\varphi$ is the natural surjection $M \to \frac{M}{M \cap \tau_i^\perp}$, $(\cdot)|_{D_i}$ is the restriction to $D_i$, $\Res_i$ corresponds to taking the residue along the divisor $D_i$ and $\ev_i$ comes from (\ref{equation:evaluation-residue-map}).
        \item Observe that the divisor $(D-D_i)|_{D_i}$ on the toric variety $D_i$ is the sum of the torus invariant divisors of $D_i$. Thus, by \cite[Section 4.3]{fulton:toric-varieties}, we have:
\[ (M \cap \tau_i^\perp) \otimes \mathcal{O}_{D_i} \cong \Omega^1_{D_i}(\log (D-D_i)|_{D_i}). \]
Taking exterior powers, we have:
\begin{align*}
    \bigwedge^p M \otimes \mathcal{O}_Y &\cong \Omega^p_Y(\log D).\\
    \bigwedge^{p-1}(M \cap \tau_i^\perp) \otimes \mathcal{O}_{D_i} &\cong \Omega^{p-1}_{D_i}(\log (D-D_i)|_{D_i}).
\end{align*}
As before, observe that the following diagram commutes:
\[
\begin{tikzcd}
    \bigwedge^p M \otimes \mathcal{O}_Y \ar[r] \ar[d, swap, "\varphi \otimes (\cdot)|_{D_i}"] & \Omega^p_Y(\log D) \ar[d, "\Res_i"]\\
    \bigwedge^{p-1}(M \cap \tau_i^\perp) \otimes \frac{M}{M \cap \tau_i^\perp} \otimes \mathcal{O}_{D_i} \ar[r, "\ev_i"] & \Omega^{p-1}_{D_i}(\log (D-D_i)|_{D_i}),
\end{tikzcd}
\]
where $\varphi$ is the natural surjection $\bigwedge^p M \to \bigwedge^{p-1}(M \cap \tau_i^\perp)$, $(\cdot)|_{D_i}$ is the restriction to $D_i$, $\Res_i$ corresponds to taking the residue along the divisor $D_i$ and $\ev_i$ comes from (\ref{equation:evaluation-residue-map}).
    \end{enumerate}
\end{remark}

We end the section by discussing the result of \cite{mustata-olano-popa} that was stated in the introduction.

\begin{remark}\label{rmk:extend-MOP}
Note that Theorem \ref{theorem:k-rational-for-toric-varieties} says that if $X$ is a toric variety and $f: \tilde X \to X$ is a strong log resolution of singularities with reduced exceptional divisor $E$, then:
\begin{enumerate}
\item $R^i f_*\Omega^1_{\tilde{X}}(\log E)=0$ for $i\ge 1$, if $X$ is simplicial;
    \item $R^i f_*\Omega^1_{\tilde{X}}(\log E)=0$ for $i\ge 2$, if $X$ is non-simplicial. 
\end{enumerate}
Since all toric varieties of dimension $2$ are simplicial, we see that
\[R^{\dim X-1} f_*\Omega^1_{\tilde{X}}(\log E)=0.\]
This implies \cite[Theorem C]{mustata-olano-popa} since it suffices to prove the statement for any log resolution of singularities with reduced exceptional divisor $E$ (\cite[Lemma 1.1]{mustata-olano-popa}).
\end{remark}

\section{Proof of (2) in Theorem \ref{theorem:vanishing-in-terms-of-codim}}\label{section:proof-of-(2)}
Let us first prove (2) in Theorem \ref{theorem:vanishing-in-terms-of-codim} since it contains the main idea of the proof in an easier setting.

\noindent We have the following lemma whose proof essentially follows from \cite[Section 4.3]{fulton:toric-varieties}.
\begin{lemma}\label{lemma:ses-for-Omega1Y(log E)}
    Let $Y$ be a smooth variety. Let $D = D_1+\dots+D_k + D_{k+1} + \dots + D_r$ be a simple normal crossing divisor on $Y$. Denote $E := D_{k+1} + \dots + D_r$. Then, we have a short exact sequence:
    \[ 0 \to \Omega^1_Y(\log E) \to \Omega^1_Y(\log D) \xrightarrow{\varphi} \bigoplus_{i=1}^k \mathcal{O}_{D_i} \to 0. \]
\end{lemma}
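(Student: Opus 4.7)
The plan is to define $\varphi$ as the direct sum of residue maps
\[\varphi = \bigoplus_{i=1}^k \Res_i : \Omega^1_Y(\log D) \to \bigoplus_{i=1}^k \mathcal{O}_{D_i},\]
and to verify exactness stalk-by-stalk. Since all three sheaves are coherent, the lemma reduces to a local calculation at each point $y \in Y$.

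At a fixed point $y$, I would choose étale-local coordinates $(x_1,\dots,x_n)$ adapted to the simple normal crossing divisor $D$. After relabeling, I may assume that the components of $D$ passing through $y$ are precisely $D_1,\dots,D_s$ (from the first block) and $D_{k+1},\dots,D_{k+t}$ (from $E$), cut out by $x_1,\dots,x_s$ and $x_{s+1},\dots,x_{s+t}$ respectively. Then $\Omega^1_Y(\log D)_y$ is free over $\mathcal{O}_{Y,y}$ with frame $\frac{dx_1}{x_1},\dots,\frac{dx_{s+t}}{x_{s+t}},dx_{s+t+1},\dots,dx_n$, while $\Omega^1_Y(\log E)_y$ is free with frame $dx_1,\dots,dx_s,\frac{dx_{s+1}}{x_{s+1}},\dots,\frac{dx_{s+t}}{x_{s+t}},dx_{s+t+1},\dots,dx_n$, and the inclusion $\Omega^1_Y(\log E) \hookrightarrow \Omega^1_Y(\log D)$ multiplies the first $s$ generators by $x_1,\dots,x_s$ and is the identity on the rest.

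In this frame, I would observe that for $1 \le i \le s$ the residue $\Res_i$ kills every basis element of $\Omega^1_Y(\log D)_y$ except $\frac{dx_i}{x_i}$, which it sends to $1 \in \mathcal{O}_{D_i,y}$. For $i \le k$ with $D_i$ not passing through $y$, the stalk $\mathcal{O}_{D_i,y}$ vanishes, so the corresponding summand is trivially exact. It follows that $\varphi$ surjects onto $\bigoplus_{i=1}^s \mathcal{O}_{D_i,y}$ at $y$, and its kernel is exactly the $\mathcal{O}_{Y,y}$-submodule generated by $x_1 \cdot \frac{dx_1}{x_1},\dots,x_s \cdot \frac{dx_s}{x_s}$ together with the remaining frame elements—precisely the image of $\Omega^1_Y(\log E)_y$. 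This gives exactness at $y$, and hence globally, once one checks that the local residue maps glue to the globally defined $\varphi$.

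I do not anticipate any real obstacle: the argument is bookkeeping once one has the standard local normal form for logarithmic 1-forms at an SNC divisor. The one conceptual point worth emphasizing is that only the components $D_1,\dots,D_k$ appear in the cokernel—residues along the components $D_{k+1},\dots,D_r$ of $E$ are already accounted for by $\Omega^1_Y(\log E)$ and contribute no new relations.
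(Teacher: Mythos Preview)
Your proposal is correct and is exactly the standard argument. The paper itself does not give a detailed proof of this lemma, remarking only that it ``essentially follows from \cite[Section 4.3]{fulton:toric-varieties}''; your local computation in coordinates is precisely what is meant there. The paper later reproves this as the $p=1$ case of Corollary~\ref{corollary:resolutioin-for-OmegapY(log E)}, where left exactness is phrased as $\bigcap_{j=1}^k \Omega^1_Y(\log(D-D_j)) = \Omega^1_Y(\log E)$ and surjectivity is checked after reducing to $\mathbb{A}^n$---but for $p=1$ this is the same stalkwise calculation you describe.
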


We will also need the following lemma to prove the required non-vanishing in (2) of Theorem \ref{theorem:vanishing-in-terms-of-codim}.
\begin{lemma}\label{lemma:non-simplicial-first-residue-map}
     Let $\sigma \subset N \otimes \mathbb{R}$ be a non-simplicial cone, with $\tau_1,\dots,\tau_k$ denoting all its rays. Let $X:= X_\sigma$ be the corresponding affine toric variety, with $F_1,\dots,F_k$ being the torus invariant divisors associated to $\tau_1,\dots,\tau_k$. Then, the natural map
    \[ \phi: M \otimes \mathcal{O}_X \to \bigoplus_{j=1}^k (\frac{M}{M \cap \tau_j^\perp} \otimes \mathcal{O}_{F_j}) \]
    is \textbf{not} surjective.
\end{lemma}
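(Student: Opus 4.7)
The plan is to exploit the $T$-action on $X$ to reduce surjectivity of $\phi$ to a finite-dimensional linear algebra statement about the rays of $\sigma$. First, since $X = X_\sigma$ is affine and all sheaves involved are coherent, $\phi$ is surjective if and only if it is surjective on global sections. By Remark \ref{remark:general-toric-varieties}(e), both sides carry natural $M$-gradings which $\phi$ respects, so it suffices to exhibit a single weight on which surjectivity fails.

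I would work with the weight-zero part, since $0 \in M \cap \tau_j^\perp$ for every $j$ so every summand of the target contributes there. On that piece, the source becomes $M \otimes \mathbb{C}$, while by the evaluation isomorphism (\ref{equation:evaluation-residue-map}) the target becomes $\bigoplus_{j=1}^k \frac{M}{M \cap \tau_j^\perp} \otimes \mathbb{C} \cong \mathbb{C}^k$. Tracing through the definitions (the $j$-th component of $\phi$ is the projection $M \to M/(M \cap \tau_j^\perp)$ tensored with restriction to $F_j$, and under (\ref{equation:evaluation-residue-map}) this projection becomes evaluation at the primitive generator $v_j$ of $N \cap \tau_j$), the map $\phi$ restricts to
\[
\psi : M \otimes \mathbb{C} \longrightarrow \mathbb{C}^k, \qquad m \otimes 1 \longmapsto \bigl(m(v_1), \ldots, m(v_k)\bigr).
\]

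The image of $\psi$ is the annihilator of $\ker\bigl(\mathbb{C}^k \to N \otimes \mathbb{C},\ e_j \mapsto v_j\bigr)$, so its dimension equals the dimension of the $\mathbb{C}$-span of $v_1, \ldots, v_k$ in $N \otimes \mathbb{C}$, which is $\dim \sigma$. Since $\sigma$ is non-simplicial, its rays are by definition linearly dependent, so $k > \dim \sigma$; thus $\psi$ fails to be surjective, and hence so does $\phi$. There is no serious obstacle in this argument: the key conceptual move is passing to the weight-zero eigenspace, after which the problem becomes a one-line dimension count. The only piece of bookkeeping is verifying that, on weight zero, $\phi$ really does become the evaluation map $\psi$, which is immediate from (\ref{equation:evaluation-residue-map}).
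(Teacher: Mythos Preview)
Your proof is correct and follows essentially the same strategy as the paper: both arguments isolate a single finite-dimensional slice of $\phi$ and then observe that the source has dimension $\dim\sigma$ while the target has dimension $k > \dim\sigma$. The only cosmetic difference is that the paper tensors with the residue field at a point of $V(\sigma)$, whereas you pass to the weight-zero graded piece; these are two ways of reaching the same dimension count.
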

\begin{proof}
    Let $L \subset N \otimes \mathbb{R}$ be the subspace spanned by $\tau_1,\dots,\tau_k$, and let its dimension be $l$. Observe that $l < k$ since $\sigma$ is non-simplicial. The above map $\phi$ factors as follows:
    \[ M \otimes \mathcal{O}_X \to \frac{M}{M \cap L^\perp} \otimes \mathcal{O}_X \xrightarrow{\psi} \bigoplus_{j=1}^k (\frac{M}{M \cap \tau_j^\perp} \otimes \mathcal{O}_{F_j}). \]
    Hence it suffices to show that $\psi$ is not surjective, which is what we will do. Observe that $\sigma$ corresponds to a torus invariant subset $V(\sigma) \subset X$. Since $\tau_j \subset \sigma$ for all $j = 1,\dots,k$, we have $V(\sigma) \subset V(\tau_j) = F_j$ for all $j = 1,\dots,k$. Now, if we pick a point $x \in V(\sigma)$, this lies in every $F_j$. Thus, tensoring $\psi$ by the residue field $k(x)$, we get:
    \[ \overline{\psi} : \frac{M}{M \cap L^\perp} \otimes k(x) \to  \bigoplus_{j=1}^k (\frac{M}{M \cap \tau_j^\perp} \otimes k(x)).  \]
    Now observe that we have
    \[\dim_{k(x)} (\frac{M}{M \cap L^\perp} \otimes k(x)) = l < k = \dim_{k(x)} \left(\bigoplus_{j=1}^k (\frac{M}{M \cap \tau_j^\perp} \otimes k(x)) \right). \]
    Thus, $\overline{\psi}$ is not surjective, hence $\psi$ is not surjective.
\end{proof}

\begin{proof}[Proof of (2) in Theorem \ref{theorem:vanishing-in-terms-of-codim}]
Let $\tau_1,\dots,\tau_k$ be the rays of $\sigma$ and let $F_1,\dots,F_k$ be the corresponding torus invariant divisors in $X$. Since $\Sigma$ is a refinement of $\sigma$, the $1$-dimensional cones of $\Sigma$ are $\tau_1,\dots,\tau_k$ along with some additional rays $\tau_{k+1},\dots \tau_r$. Let $D_1,\dots,D_r$ be the corresponding torus invariant divisors in $Y$. Observe that $D_1,\dots,D_k$ are the strict transforms of $F_1,\dots,F_k$. Define the following (simple normal crossing) divisors on $Y$: $E =  D_{k+1} + \dots + D_r$ and $D = D_1 + \dots + D_k + E$.

By Lemma \ref{lemma:ses-for-Omega1Y(log E)}, we have an exact sequence
    \[ 0 \to \Omega^1_Y(\log E) \to \Omega^1_Y(\log D) \xrightarrow{\varphi} \bigoplus_{j=1}^k \mathcal{O}_{D_j} \to 0. \]
     By Remark \ref{remark:trivialization-of-log-differentials-for-toric-varieties}, we have $\Omega_Y^1(\log D) \simeq M \otimes \mathcal{O}_Y$. Under this identification, the above sequence becomes
    \begin{align*}
      0 \to \Omega^1_Y(\log E) \to  M \otimes \mathcal{O}_Y \xrightarrow{\phi} \bigoplus_{j=1}^k \frac{M}{M \cap \tau_j^\perp} \otimes \mathcal{O}_{D_j} \to 0.
    \end{align*}
    For each $j$, observe that $\pi|_{D_j}:D_j \to F_j$ is a projective birational morphism from the smooth variety $D_j$ to the toric variety $F_j$. Since toric varieties are normal (see Remark \ref{remark:general-toric-varieties}), we get $\pi_*\mathcal{O}_{D_j} = \mathcal{O}_{F_j}$. Moreover, since $X$ has rational singularities (see Remark \ref{remark:general-toric-varieties}), we have: $R^i\pi_*\mathcal{O}_{Y} = 0$ for all $i>0$. Thus, when we apply $\pi_*$ to the above sequence, we get the following exact sequence:
    \[ 0 \to \pi_*\Omega^1_Y(\log E) \to M \otimes \mathcal{O}_X \xrightarrow{\phi} \bigoplus_{j=1}^k \frac{M}{M \cap \tau_j^\perp} \otimes \mathcal{O}_{F_j} \to R^1\pi_*\Omega^1_Y(\log E) \to 0 \]
    and that $R^i\pi_*\Omega^1_{Y}(\log E) = 0 \text{ for $i>1$}$. Finally, Lemma \ref{lemma:non-simplicial-first-residue-map} implies that $\phi$ is not surjective, and so, $R^1\pi_*\Omega^1_Y(\log E) \neq 0$ as required.
\end{proof}

\section{Quotient complexes of the Koszul complex}\label{section:koszul-complexes}

In this section and the next, we generalize Lemma \ref{lemma:ses-for-Omega1Y(log E)} to obtain a $\pi_*$-acyclic resolution of $\Omega^p_{Y}(\log E)$.

For every $n \in \mathbb{N}$, consider the polynomial ring $R = \mathbb{C}[x_1,\dots,x_n]$, with the natural multi-grading given by setting $\deg(x_i) = \delta_i := (0,\dots,0,1,0,\dots,0)$ with $1$ at the $i$-th slot. Let $V := \mathbb{C}^n$ be the standard $n$-dimensional vector space over $\mathbb{C}$ with basis $\{e_1,\dots,e_n\}$. Consider the trivial $R$-module associated to $V$:
\[ E := R \cdot e_1 \oplus \dots \oplus R \cdot e_n. \]
Define a multi-grading on $E$ by setting $\deg(e_i) =\delta_i$. Consider the total grading induced by the multi-grading: if $\deg(f) = (a_1,\dots,a_n)$, then define the total degree to be $\tdeg(f) = \sum_i a_i$. Consider the following $R$-module map:
    \begin{align*}
        s: E &\to R\\
        e_i &\mapsto x_i.
    \end{align*}
    Observe that $s$ respects the multi-grading. Since the $x_i$ form a regular sequence, the Koszul complex associated to $s$ is exact:
    \begin{align*}
        0 \to \bigwedge^n E \xrightarrow{d_n} \bigwedge^{n-1} E \xrightarrow{d_{n-1}} \dots \to \bigwedge^1 E \xrightarrow{d_1} R \to R/(x_1,\dots,x_n) \to 0,
    \end{align*}
    with the differential $d_r$ given as:
    \[ d_r(f_1 \wedge \dots \wedge f_r) := \sum_{i=1}^r (-1)^{i+1} s(f_i) f_1 \wedge \dots \wedge \widehat{f_i} \wedge \dots \wedge f_r. \]
    The differentials respect the multi-grading as well. Now, consider the exact sequence in a fixed total degree $p>0$:
    \begin{align*}
        0 \to (\bigwedge^p E)_p \xrightarrow{d_p} (\bigwedge^{p-1} E)_p \xrightarrow{d_{p-1}} \dots (\bigwedge^1 E)_p \xrightarrow{d_1} R_p \to 0.
    \end{align*}
    (Observe that $(R/(x_1,\dots,x_n))_p =0$ and $(\bigwedge^k E)_p = 0$ for all $k>p$.) We can write the terms of this complex as a direct sum as follows:
    \[ (\bigwedge^{p-m} E)_p = A_{p-m} \oplus B_{p-m} \]
    where $A_{p-m}$ is the $\mathbb{C}$-span of all $\omega = x_{j_1}\dots x_{j_m} \cdot e_{i_1} \wedge \dots \wedge e_{i_{p-m}}$ such that if $\deg(\omega) =(a_1,\dots,a_n)$, then all $a_i \leq 1$ (this is the same as saying that $i_r \neq j_s \text{ for any } r,s$, and $j_s \neq j_{s'}$ for any $s,s'$), and $B_{p-m}$ is the $\mathbb{C}$-span of all $\omega = x_{j_1}\dots x_{j_m} \cdot e_{i_1} \wedge \dots \wedge e_{i_{p-m}}$ such that if $\deg(\omega) =(a_1,\dots,a_n)$, then some $a_i >1$. Since the differentials respect the multi-grading, we deduce that $d_{p-m}(A_{p-m}) \subset A_{p-m-1}$ and $d_{p-m}(B_{p-m}) \subset B_{p-m-1}$ for all $m$. Thus we get an exact sequence:
    \begin{align*}
        0 \to A_p \to A_{p-1} \to \dots \to A_1 \to R_p \to 0.
    \end{align*}

Now, let $L_j \subset V$ be the subspace spanned by $\{e_1,\dots,\widehat{e_j},\dots,e_n\}$. Thus $\frac{V}{L_j}$ is spanned by $\overline{e_j}$, the image of $e_j$. Denote $L_{j_1,\dots,j_m} := L_{j_1} \cap \dots \cap L_{j_m}$ and $V_{j_1,\dots,j_m} := \frac{V}{L_{j_1}} \otimes \dots \otimes \frac{V}{L_{j_m}}$. With these identifications, observe that:
    \begin{align*}
        A_{p-m} = \bigoplus_{j_1<\dots<j_m} \bigwedge^{p-m} L_{j_1,\dots,j_m} \otimes V_{j_1,\dots,j_m}
    \end{align*}
where we identify $x_{j_1}\dots x_{j_m}$ with the basis element $\overline{e_{j_1}}\otimes \dots \otimes \overline{e_{j_m}}$ of $V_{j_1,\dots,j_m}$. Thus we have an exact sequence:
\begin{equation}\label{equation:koszul-subcomplex-n}
\resizebox{\textwidth}{!}{$
    0 \to \bigwedge^p V \to \displaystyle\bigoplus_{j=1}^n \left( \bigwedge^{p-1} L_j \otimes \frac{V}{L_j} \right) \to \bigoplus_{1\leq j_1<j_2 \leq n} \left( \bigwedge^{p-2} L_{j_1,j_2} \otimes V_{j_1,j_2}\right) \to \dots \bigoplus_{1 \leq j_1 < \dots < j_p \leq n} V_{j_1,\dots,j_p} \to 0,
$}
\end{equation}
with $\displaystyle\bigwedge^p V$ living in cohomological degree $-p$. For each $k \leq n$, observe that we have the following subcomplex:
\begin{equation}
    \begin{split}
        0 \to 0 \to \bigoplus_{j=k+1}^{n} \left( \displaystyle \bigwedge^{p-1} L_j \otimes \frac{V}{L_j} \right) \to \bigoplus_{\substack{1\leq j_1<j_2 \leq n,\\ k+1\leq j_2}} \left( \bigwedge^{p-2} L_{j_1,j_2} \otimes V_{j_1,j_2}\right)& \to \dots \\
        \dots \to \bigoplus_{\substack{1 \leq j_1 < \dots < j_p \leq n,\\ k+1 \leq j_p}}& V_{j_1,\dots,j_p} \to 0.
    \end{split}
\end{equation}
Consider the corresponding quotient complex, placed in cohomological degrees $-p,\dots,0$:
\begin{equation}\label{equation:koszul-subcomplex-k}
\begin{split}
    C_{n,k,p}: \quad 0 \to \bigwedge^p V \to \bigoplus_{j=1}^k \left( \bigwedge^{p-1} L_j \otimes \frac{V}{L_j} \right) \to \bigoplus_{1\leq j_1<j_2 \leq k} \left( \bigwedge^{p-2} L_{j_1,j_2} \otimes V_{j_1,j_2} \right) \to \dots\\
    \dots \to \bigoplus_{1 \leq j_1 < \dots < j_p \leq k} V_{j_1,\dots,j_p} \to 0.
\end{split}
\end{equation}

\begin{lemma}\label{lemma:main-linear-algebra-result}
    For every $n \in \mathbb{N}$, for every $0 \leq k \leq n$ and for every $0<p \leq n$, the complex $C_{n,k,p}$ is exact at all cohomological degrees other than $-p$.
\end{lemma}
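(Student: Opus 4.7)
My plan is to reduce the claim to the known fact that each multi-graded component of the Koszul complex of a regular sequence is exact. Since the Koszul differential preserves the multi-grading on $R$ and $E$, and the subcomplex that is quotiented out to define $C_{n,k,p}$ is itself multi-graded, the complex $C_{n,k,p}$ decomposes as $\bigoplus_S C^S$ over $p$-element subsets $S \subseteq \{1, \ldots, n\}$: here $C^S$ is spanned by the squarefree monomials $x_J \cdot e_I$ with $J \sqcup I = S$.

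Fix such an $S$ and set $a := |S \cap \{1, \ldots, k\}|$. The condition $J \subseteq \{1, \ldots, k\}$ forces $J \subseteq S \cap \{1, \ldots, k\}$, and $I = S \setminus J$. Hence a basis of $C^S$ in cohomological degree $-p+m$ is indexed by size-$m$ subsets $J$ of $S \cap \{1, \ldots, k\}$, placing $C^S$ in cohomological degrees $-p, -p+1, \ldots, -p+a$ with dimensions $\binom{a}{0}, \binom{a}{1}, \ldots, \binom{a}{a}$. Peeling off the fixed tensor factor $e_{S \cap \{k+1, \ldots, n\}}$, I would identify $C^S$ with the multi-degree $(1, 1, \ldots, 1)$ component of the Koszul complex of the regular sequence $(x_i)_{i \in S \cap \{1, \ldots, k\}}$.

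If $a = 0$, then $C^S$ is a single copy of $\mathbb{C}$ in cohomological degree $-p$, and is trivially exact in every other degree. If $a \geq 1$, then the Koszul complex of the $a$-variable regular sequence is exact, and since the differential preserves multi-grading, its $(1, \ldots, 1)$-component is exact at every degree. Thus $H^i(C^S) = 0$ for all $i \neq -p$ in both cases, and summing over $S$ gives the lemma.

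The one point that requires verification is that the differential on $C^S$ coincides with the $(1, \ldots, 1)$-component of the Koszul differential in the $a$ variables indexed by $S \cap \{1, \ldots, k\}$. This is the key place where the cutoff $k$ enters: expanding $d(x_J \cdot e_{S \setminus J})$ as a signed sum over $i \in S \setminus J$, any summand with $i > k$ has $x$-support outside $\{1, \ldots, k\}$ and is therefore killed in the quotient defining $C_{n,k,p}$, leaving precisely the terms with $i \in (S \cap \{1, \ldots, k\}) \setminus J$. This matches the Koszul differential on the $a$ variables, completing the identification.
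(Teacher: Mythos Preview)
Your argument is correct and takes a genuinely different route from the paper's. The paper proves the lemma by a double induction: ascending on $p$ and descending on $k$, using at each step the short exact sequence
\[
0 \to C_{n-1,k_0,p_0-1} \otimes \tfrac{V}{L_{k_0+1}} \to C_{n,k_0+1,p_0} \to C_{n,k_0,p_0} \to 0
\]
(with the base case $k=n$ being the already-established exactness of \eqref{equation:koszul-subcomplex-n}). You instead decompose $C_{n,k,p}$ directly along its multi-grading as $\bigoplus_S C^S$ over $p$-element subsets $S \subset \{1,\ldots,n\}$, and identify each $C^S$ (after factoring out the frozen wedge $e_{S\cap\{k+1,\ldots,n\}}$) with a shift of the multi-degree $(1,\ldots,1)$ piece of the Koszul complex on the $a=|S\cap\{1,\ldots,k\}|$ variables indexed by $S\cap\{1,\ldots,k\}$. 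The sign check works cleanly because the indices in $S\cap\{1,\ldots,k\}$ all precede those in $S\cap\{k+1,\ldots,n\}$, so the position of any $i\le k$ inside $I=S\setminus J$ agrees with its position inside $(S\cap\{1,\ldots,k\})\setminus J$; this makes your ``peeling'' an honest isomorphism of complexes. Your approach is more elementary and avoids the induction entirely; the paper's approach, on the other hand, packages the comparison between $C_{n,k,p}$ and $C_{n,k+1,p}$ in a way that mirrors how the residue complexes on the toric variety are later filtered, so it is closer in spirit to the geometric application even if less direct here.
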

\begin{proof}
    Observe that we have already proved the lemma for $C_{n,n,p}$ in (\ref{equation:koszul-subcomplex-n}). To prove the lemma in general, the idea is to use (ascending) induction on $p$, and (descending) induction on $k$.
    
    We first induct on $p$. For the base case, observe that when $p=1$, then for any $n$ and for any $k \leq n$, we get:
    \[C_{n,k,1}:  \quad 0 \to  V\xrightarrow{\phi} \displaystyle \bigoplus_{j=1}^{k} \frac{V}{L_{j}} \to 0.\]
    This is exact in cohomological degree $0$ since $\phi$ is clearly surjective. Now, fix $p_0$, and assume that we have proved the lemma for $p \leq p_0-1$, and for any $n$ and for any $k \leq n$.

    Now fixing an $n$ and $k_0$, let us attempt to prove the lemma for $C_{n,k_0,p_0}$. Observe that by (\ref{equation:koszul-subcomplex-n}), we have proved the statement for $C_{n,n,p_0}$. Thus by descending induction on $k$, assume that we have proved the statement for $C_{n,k,p_0}$ for all $k_0+1\leq k \leq n$.
    
    Consider the following short exact sequence of complexes:
    \[\begin{tikzcd}
        & 0 \ar[d] & 0 \ar[d]\\
        0 \ar[r] \ar[d] & \displaystyle \bigwedge^{p_0-1} \left( L_{k_0+1} \otimes \frac{V}{L_{k_0+1}} \right) \ar[r] \ar[d] & \displaystyle \bigoplus_{j=1}^{k_0} \left( \displaystyle \bigwedge^{p_0-2} (L_{j} \cap L_{k_0+1}) \otimes \frac{V}{L_{j}} \otimes \frac{V}{L_{k_0+1}}\right) \ar[d] \dots\\
        \displaystyle \bigwedge^{p_0} V \ar[r] \ar[d, equal] & \displaystyle \bigoplus_{j=1}^{k_0+1} \left( \displaystyle \bigwedge^{p_0-1} L_j \otimes \frac{V}{L_j} \right) \ar[r] \ar[d] & \displaystyle \bigoplus_{1\leq j_1<j_2 \leq k_0+1} \left( \displaystyle \bigwedge^{p_0-2} (L_{j_1} \cap L_{j_2}) \otimes \frac{V}{L_{j_1}} \otimes \frac{V}{L_{j_2}}\right) \ar[d] \dots\\
        \displaystyle \bigwedge^{p_0} V \ar[r] & \displaystyle \bigoplus_{j=1}^{k_0} \left( \displaystyle \bigwedge^{p_0-1} L_j \otimes \frac{V}{L_j} \right) \ar[d] \ar[r] & \displaystyle \bigoplus_{1\leq j_1<j_2 \leq k_0} \left( \displaystyle \bigwedge^{p_0-2} (L_{j_1} \cap L_{j_2}) \otimes \frac{V}{L_{j_1}} \otimes \frac{V}{L_{j_2}}\right) \ar[d] \dots \\
        & 0 & 0.
    \end{tikzcd}\]
    By the induction hypothesis, the central row, which is just $C_{n,k_0+1,p_0}$, is exact at cohomological degrees other than $-p_0$. The top row is just $C_{n-1,k_0,p_0-1} \otimes \frac{V}{L_{k_0+1}}$, hence it is exact at cohomological degrees other than $-(p_0-1)$ by the induction hypothesis. Thus, the bottom row is exact at cohomological degrees other than $-p_0$, finishing the proof.
\end{proof}

\section{Complexes on toric varieties}\label{section:complexes-on-toric-varieties}\label{section:complexes-on-toric-varieties}

As a consequence of Lemma \ref{lemma:main-linear-algebra-result}, we obtain in this section the required $\pi_*$-acyclic resolution of $\Omega^p_{Y}(\log E)$ in Corollary \ref{corollary:resolutioin-for-OmegapY(log E)}. We also partially recover \cite[Theorem 3.6.3]{Oda}, which will be useful for our proof later. 

Consider the following additional notation: let $\sigma \subset N \otimes \mathbb{R}$ be a simplicial cone and let $X:=X_\sigma$ be the corresponding affine toric variety. Let $\tau_1,\dots,\tau_k$ denote a subset of the rays in $\sigma$ and let $F_1,\dots,F_k$ be the corresponding torus invariant divisors of $X$. Denote $F_{j_1,\dots,j_m}:= F_{j_1} \cap \dots \cap F_{j_m}$.

Consider the following sequence:
\begin{equation}\label{equation:toric-residue-sequence}
\resizebox{\textwidth}{!}{$
    \begin{split}
        \bigwedge^p V \otimes \mathcal{O}_X \xrightarrow{\widehat{d_p}} \bigoplus_{j=1}^k \left( \bigwedge^{p-1} L_j \otimes \frac{V}{L_j} \otimes \mathcal{O}_{F_j} \right) \xrightarrow{\widehat{d_{p-1}}} \bigoplus_{1 \leq j_1<j_2 \leq k} \left( \bigwedge^{p-2} (L_{j_1,j_2}) \otimes V_{j_1,j_2} \otimes \mathcal{O}_{F_{j_1,j_2}} \right) \to \dots\\
        \dots \bigoplus_{1 \leq j_1 < \dots < j_p \leq k} \left( V_{j_1,\dots,j_p} \otimes \mathcal{O}_{F_{j_1,\dots,j_p}} \right)\to 0.
    \end{split}
$}
\end{equation}
where $\displaystyle \bigwedge^p V \otimes \mathcal{O}_X$ lies in cohomological degree $0$. We define the differentials $\widehat{d_r}$ as before while also factoring in a restriction map:
\begin{align*}
    \widehat{d_r}(e_{i_1} &\wedge \dots \wedge e_{i_r} \otimes \overline{e_{j_1}} \otimes \dots \otimes \overline{e_{j_m}} \otimes f) :=\\
    &\sum_{l=1}^r (-1)^{l+1} e_{i_1} \wedge \dots \wedge \widehat{e_{i_l}} \wedge \dots\wedge e_{i_r} \otimes \overline{e_{i_l}} \otimes \overline{e_{j_1}} \otimes \dots \otimes \overline{e_{j_m}} \otimes f|_{F_{i_l,j_1,\dots,j_m}},
\end{align*}
where $f \in \mathcal{O}_{F_{j_1,\dots,j_m}}$.

\begin{corollary}\label{corollary:toric-residue-sequence}
The sequence (\ref{equation:toric-residue-sequence}) is an exact complex at all cohomological degrees other than $0$.
\end{corollary}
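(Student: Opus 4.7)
The plan is to verify exactness of (\ref{equation:toric-residue-sequence}) weight-by-weight under the torus action and reduce to the linear-algebra statement of Lemma \ref{lemma:main-linear-algebra-result}. Since $X$ is affine and every sheaf appearing in (\ref{equation:toric-residue-sequence}) is quasi-coherent, it suffices to check exactness after taking global sections. By Remark \ref{remark:general-toric-varieties}(e), each ring $H^0(F_{j_1,\dots,j_m}, \mathcal{O}_{F_{j_1,\dots,j_m}})$ carries an $M$-grading with $\chi^u$ in weight $u$ for $u \in \sigma^\vee \cap M \cap \tau_{j_1}^\perp \cap \dots \cap \tau_{j_m}^\perp$, and the restriction maps $f \mapsto f|_{F_{i,j_1,\dots,j_m}}$ are torus-equivariant quotient maps. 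Hence the differentials $\widehat{d_r}$ preserve the $M$-grading, and it is enough to verify exactness of each weight-$u$ subcomplex.

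Fix $u \in M$ and set $J(u) := \{j \in \{1,\dots,k\} : u \in \tau_j^\perp\}$. If $u \notin \sigma^\vee$ the entire weight-$u$ subcomplex vanishes. Otherwise the weight-$u$ piece of $\mathcal{O}_{F_{j_1,\dots,j_m}}$ equals $\mathbb{C} \cdot \chi^u$ when $\{j_1,\dots,j_m\} \subseteq J(u)$ and is zero otherwise, so the weight-$u$ subcomplex reads
\begin{equation*}
\bigwedge^p V \to \bigoplus_{j \in J(u)} \bigwedge^{p-1} L_j \otimes \frac{V}{L_j} \to \dots \to \bigoplus_{\substack{j_1 < \dots < j_p\\ \{j_1,\dots,j_p\} \subseteq J(u)}} V_{j_1,\dots,j_p} \to 0,
\end{equation*}
with differentials inherited from $\widehat{d_r}$.

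After permuting the basis of $V$ so that the elements of $J(u)$ occupy the first $|J(u)|$ indices, this complex is identified, up to the trivial degree shift placing $\bigwedge^p V$ in cohomological degree $0$ rather than $-p$, with the complex $C_{n,|J(u)|,p}$ of Lemma \ref{lemma:main-linear-algebra-result}. That lemma then yields exactness at every cohomological degree except at $\bigwedge^p V$, which is precisely the assertion. The only genuinely technical points are the torus-equivariance of the $\widehat{d_r}$ (immediate from the defining formula) and the invariance of Lemma \ref{lemma:main-linear-algebra-result} under basis permutation; the main conceptual step, and the payoff of the Koszul machinery in Section \ref{section:koszul-complexes}, is the identification of each weight-$u$ piece with $C_{n,|J(u)|,p}$.
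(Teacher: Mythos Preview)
Your proof is correct and follows essentially the same approach as the paper: decompose the complex by the $M$-grading coming from the torus action, observe that the weight-$u$ subcomplex is indexed by those $j$ with $u\in\tau_j^\perp$, and identify this with the complex $C_{n,|J(u)|,p}$ (shifted) to which Lemma \ref{lemma:main-linear-algebra-result} applies. The paper writes $I_u=\{j\mid u\in\mathcal{O}_{F_j}\}$ in place of your $J(u)$ and is slightly less explicit about the relabeling of indices, but the argument is the same.
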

\begin{proof}
    Observe that by Remark \ref{remark:general-toric-varieties}.(e), every $\mathcal{O}_{F_{j_1,\dots,j_m}} = \displaystyle\bigoplus_{u \in \sigma^\vee \cap M \cap \tau_{j_1}^\perp \cap \dots \cap \tau_{j_m}^\perp} \mathbb{C} \cdot \chi^u$ is an $M$-graded ring.
    Thus, the terms of the above sequence (\ref{equation:toric-residue-sequence}) all inherit an $M$-grading. Since the map given by restriction of functions $\mathcal{O}_{F_{j_1,\dots,j_m}} \to \mathcal{O}_{F_{i_l,j_1,\dots,j_m}}$ is an $M$-graded homomorphism, the sequence (\ref{equation:toric-residue-sequence}) is actually $M$-graded. For $u \in M$, observe that:
    \[ u \in \mathcal{O}_{F_{j_1,\dots,j_m}} \iff u \in \mathcal{O}_{F_{j_i}} \text{ for all } i =1,\dots,m.\]
    Thus we can derive two things from this. First, if $u \notin \sigma^\vee \cap M$, then there are no terms in the complex (\ref{equation:toric-residue-sequence}) in degree $u$ and so, we can ignore such $u$. Second, if we fix $u \in \sigma^\vee \cap M$, and define the indexing set $I_u := \{ j \mid u \in \mathcal{O}_{F_j} \}$, then the sequence (\ref{equation:toric-residue-sequence}) in degree $u$ is just:
    \[ \bigwedge^p V \to \bigoplus_{j \in I_u} \left( \bigwedge^{p-1} L_j \otimes \frac{V}{L_j} \right) \to \bigoplus_{\substack{j_1,j_2 \in I_u,\\ j_1<j_2}} \left( \bigwedge^{p-2} L_{j_1,j_2} \otimes V_{j_1,j_2} \right) \to \dots \bigoplus_{\substack{j_1,\dots,j_p \in I_u,\\ j_1 < \dots < j_p}} V_{j_1,\dots,j_p} \to 0. \]
    But this is the same as the complex $C_{n,k_0,p}[-p]$ where $k_0 = \abs{I_u}$, and so it is an exact complex in all cohomological degrees other than $0$ by Lemma \ref{lemma:main-linear-algebra-result}. Since this holds for every $u \in M$, the sequence (\ref{equation:toric-residue-sequence}) is an exact complex at all cohomological degrees other than $0$.
\end{proof}

With the same notation as above, assume additionally that $\tau_1,\dots,\tau_k$ are \textbf{all} the rays in $\sigma$. In (\ref{equation:toric-residue-sequence}), set $V = M\otimes \mathbb{C}$ and $L_i = \tau_i^\perp$ to get the Ishida complex:
    \begin{equation}\label{equation:ishida-complex}
    \resizebox{\textwidth}{!}{$
    \begin{split}
        S_p: \quad  \bigwedge^p V \otimes \mathcal{O}_X \to \bigoplus_{j=1}^k \left( \bigwedge^{p-1} \tau_j^{\perp} \otimes \frac{V}{\tau_j^\perp} \otimes \mathcal{O}_{F_j} \right) \to \bigoplus_{1 \leq j_1<j_2 \leq k} \left( \bigwedge^{p-2} (\tau_{j_1}^\perp \cap \tau_{j_2}^\perp) \otimes \frac{V}{\tau_{j_1}^\perp} \otimes \frac{V}{\tau_{j_2}^\perp} \otimes \mathcal{O}_{F_{j_1} \cap F_{j_2}} \right) \to \dots\\
        \dots \to \bigoplus_{1 \leq j_1 < \dots < j_p \leq k} \left( \frac{V}{\tau_{j_1}^\perp} \otimes \dots \otimes \frac{V}{\tau_{j_p}^\perp} \otimes \mathcal{O}_{F_{j_1}\cap \dots \cap F_{j_p}} \right)\to 0, 
    \end{split}$}
    \end{equation}
    with $\displaystyle \bigwedge^p V \otimes \mathcal{O}_X$ lying in cohomological degree $0$. We now partially recover \cite[Theorem 3.6.3]{Oda}.
\begin{corollary}\label{corollary:simplicial-complex}
    The Ishida complex $S_p$ is exact at all cohomological degrees other than $0$.
\end{corollary}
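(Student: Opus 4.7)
The plan is to derive Corollary \ref{corollary:simplicial-complex} as an immediate specialization of Corollary \ref{corollary:toric-residue-sequence}. The Ishida complex $S_p$ is, after setting $V = M\otimes \mathbb{C}$ and $L_i = \tau_i^\perp$, precisely what the sequence (\ref{equation:toric-residue-sequence}) becomes when one takes $\tau_1,\dots,\tau_k$ to be \emph{all} the rays of $\sigma$. What needs to be checked is only that this substitution is compatible with the set-up of Section \ref{section:koszul-complexes}, where $V = \mathbb{C}^n$ and $L_j$ is the hyperplane spanned by $e_1,\dots,\widehat{e_j},\dots,e_n$.

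This compatibility is a short linear-algebra check. Since $\sigma$ is simplicial, its rays $\tau_1,\dots,\tau_k$ are linearly independent, so in particular $k \le n$. Pick primitive generators $v_i \in N \cap \tau_i$ for $i=1,\dots,k$ and extend them to a basis $v_1,\dots,v_n$ of $N \otimes \mathbb{R}$. Taking the dual basis $e_1^*,\dots,e_n^*$ of $M \otimes \mathbb{C}$ yields an identification $M \otimes \mathbb{C} \cong \mathbb{C}^n$ under which $\tau_i^\perp$ is carried to the span of $e_1^*,\dots,\widehat{e_i^*},\dots,e_n^*$, i.e.\ exactly the subspace denoted $L_i$ in Section \ref{section:koszul-complexes}. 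Hence, under this identification, the complex $S_p$ coincides term-by-term and differential-by-differential with (\ref{equation:toric-residue-sequence}), and the required exactness at all cohomological degrees other than $0$ follows at once from Corollary \ref{corollary:toric-residue-sequence}.

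There is essentially no obstacle here: the substantive work has already been carried out in Lemma \ref{lemma:main-linear-algebra-result} and Corollary \ref{corollary:toric-residue-sequence}. The only mildly delicate point is that one should allow $k<n$ (that is, $\sigma$ need not be full-dimensional), which is handled transparently by extending the ray generators to a basis of $N \otimes \mathbb{R}$ before dualising.
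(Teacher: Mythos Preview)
Your proposal is correct and follows exactly the paper's (implicit) approach: the Ishida complex $S_p$ is obtained from (\ref{equation:toric-residue-sequence}) by the substitution $V = M\otimes\mathbb{C}$, $L_i = \tau_i^\perp$, and the corollary is immediate from Corollary \ref{corollary:toric-residue-sequence}. Your linear-algebra verification that simpliciality makes the $\tau_i^\perp$ into coordinate hyperplanes for a suitable basis of $M\otimes\mathbb{C}$ is precisely the compatibility check the paper leaves to the reader.
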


We also get the required generalization of Lemma \ref{lemma:ses-for-Omega1Y(log E)} as a corollary.
\begin{corollary}\label{corollary:resolutioin-for-OmegapY(log E)}
    Consider the setting of Remark \ref{remark:residue-sequence-smooth-varieties}. Then, the sequence $\widetilde{C_p}$ in (\ref{equation:smooth-varieties-residue-sequence}) is a resolution of $\Omega^p_Y(\log E)$.
\end{corollary}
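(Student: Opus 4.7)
The plan is to verify the exactness of the augmented complex $0 \to \Omega^p_Y(\log E) \to \widetilde{C_p}$ stalk by stalk at each point $y \in Y$, reducing locally to a toric setup to which Corollary \ref{corollary:toric-residue-sequence} directly applies. At a fixed $y \in Y$, I would choose local coordinates $z_1,\ldots,z_n$ adapted to the SNC divisor $D$: after reordering, the components of $D_1+\cdots+D_k$ through $y$ are $D_1,\ldots,D_a$, cut out by $z_1,\ldots,z_a$, and the components of $E$ through $y$ are cut out by $z_{a+1},\ldots,z_{a+b}$. Let $V$ be the $\mathbb{C}$-vector space with basis $\xi_i := \frac{dz_i}{z_i}$ for $i \leq a+b$ and $\xi_i := dz_i$ for $i > a+b$. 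Then the standard trivialization of log differentials yields
\begin{align*}
\Omega^p_Y(\log D)_y &\cong \bigwedge^p V \otimes \mathcal{O}_{Y,y}, \\
\Omega^{p-m}_{D_{j_1,\ldots,j_m}}(\log T_{j_1,\ldots,j_m})_y &\cong \bigwedge^{p-m}L_J \otimes V_J \otimes \mathcal{O}_{Y,y}/(z_{j_1},\ldots,z_{j_m}),
\end{align*}
for $J = \{j_1<\cdots<j_m\} \subset \{1,\ldots,a\}$, with $L_J := \operatorname{span}\{\xi_i : i \notin J\}$ and $V_J := \bigotimes_{j \in J} V/L_{\{j\}}$; and these identifications intertwine the geometric residues with the combinatorial differentials $\widehat{d_r}$ from (\ref{equation:toric-residue-sequence}).

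Next I would observe that after formal completion $\widehat{\mathcal{O}}_{Y,y} \cong \mathbb{C}[[z_1,\ldots,z_n]]$, the stalk complex $(\widetilde{C_p})_y$ becomes isomorphic to the corresponding stalk of the toric sequence (\ref{equation:toric-residue-sequence}) for the smooth simplicial toric variety $X = \mathbb{A}^{a+b} \times \mathbb{G}_m^{n-a-b}$ (with cone $\sigma = \mathbb{R}_{\geq 0}^{a+b}$) and selected rays $\tau_1,\ldots,\tau_a$, computed at a point where all of $F_1,\ldots,F_a$ pass through. Both complexes are built functorially from the local ring together with the regular sequence $z_1,\ldots,z_{a+b}$, so they become isomorphic after completion. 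Corollary \ref{corollary:toric-residue-sequence} then asserts that the toric complex is exact at all cohomological degrees other than $0$; by faithful flatness of completion, $(\widetilde{C_p})_y$ is exact in all cohomological degrees $\geq 1$.

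Finally, the kernel of the first differential can be computed by inspection: a form $\omega \in \Omega^p_Y(\log D)_y$ satisfies $\Res_{D_j}(\omega) = 0$ for every $j = 1,\ldots,k$ iff $\omega$ has no log pole along any component of $D_1+\cdots+D_k$, which exactly cuts out $\Omega^p_Y(\log E)_y \subset \Omega^p_Y(\log D)_y$. Hence the augmented sequence $0 \to \Omega^p_Y(\log E) \to \widetilde{C_p}$ is exact, proving that $\widetilde{C_p}$ resolves $\Omega^p_Y(\log E)$. The main obstacle will be the local identification in the first two steps --- verifying that the geometric residues correspond precisely to the combinatorial $\widehat{d_r}$ and that the formal completions of $\mathcal{O}_{Y,y}$ and of $\mathcal{O}_X$ at an appropriate torus-invariant point can be aligned --- but once this bookkeeping is done, Corollary \ref{corollary:toric-residue-sequence} supplies the needed exactness with essentially no further computation.
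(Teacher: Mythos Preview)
Your proposal is correct and follows essentially the same approach as the paper: both arguments identify the kernel of $\Res_p$ as $\Omega^p_Y(\log E)$ via the residue short exact sequences, then reduce the remaining exactness to a local toric model and invoke Corollary~\ref{corollary:toric-residue-sequence}. The only cosmetic difference is that the paper phrases the reduction as ``choosing algebraic coordinates, we reduce to $Y=\mathbb{A}^n$'' while you carry it out stalkwise via formal completion and faithful flatness, and you use the model $\mathbb{A}^{a+b}\times\mathbb{G}_m^{n-a-b}$ rather than $\mathbb{A}^n$; these are interchangeable ways of making the same local identification precise.
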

\begin{proof}
    First, let's check that:
    \[ 0 \to \Omega^p_Y(\log E) \to \Omega^p_Y(\log D) \to \bigoplus_{j=1}^k \Omega^{p-1}_{D_j} (\log T_j) \]
    is exact. Observe that for each $j$, we have the short exact sequence
    \[ 0 \to \Omega^p_Y(\log (D-D_j)) \to \Omega^p_Y(\log D) \to \Omega^{p-1}_{D_j} (\log T_j) \to 0. \]
    Thus
    \begin{align*}
        \ker\left(\Omega^p_Y(\log D) \to \bigoplus_{j=1}^k \Omega^{p-1}_{D_j} (\log T_j)\right) &= \bigcap_{j=1}^k \Omega^p_Y(\log (D-D_j))\\
        &= \Omega^p_Y(\log E).
    \end{align*}
    
    We now need to prove that the rest of $\widetilde{C_p}$ is exact. Choosing algebraic coordinates, we reduce to the case when $Y = \mathbb{A}^n = \Spec \mathbb{C}[y_1,\dots,y_n]$; $D_j = \{y_j=0\}$ for $j=1,\dots,k$; $E = \{y_{k+1}\dots y_r = 0\}$ with $r \leq n$.
    
    Observe that $\widetilde{C_p}$ can be identified with the complex (\ref{equation:toric-residue-sequence}) after identifying:
    \begin{align*}
        X &= \mathbb{A}^n,\\
        V &= \mathbb{C}\text{-linear span of }\frac{dy_1}{y_1}, \dots, \frac{dy_r}{y_r},dy_{r+1},\dots,dy_n, \\
        L_j &= \mathbb{C}\text{-linear span of }\frac{dy_1}{y_1},\dots,\widehat{\frac{dy_j}{y_j}}, \dots, \frac{dy_r}{y_r}, dy_{r+1}, \dots, dy_n\text{ for }j=1,\dots,k.
    \end{align*}
    By Corollary \ref{corollary:toric-residue-sequence}, the complex (\ref{equation:toric-residue-sequence}) is exact at all cohomological degrees other than $0$. Hence, $\widetilde{C_p}$ is a resolution of $\Omega^p_Y(\log E)$.
\end{proof}

\section{Proof of (1) in Theorem \ref{theorem:vanishing-in-terms-of-codim}}\label{section:proof-of-(1)}

We recall the notation in Theorem \ref{theorem:vanishing-in-terms-of-codim}. Let $X$ be an affine toric variety associated to a cone $\sigma \subset N \otimes \mathbb{R}$. Let $Y$ be the toric variety associated to a fan $\Sigma$ which refines $\sigma$, such that the induced toric morphism $\pi:Y \to X$ is a log resolution of singularities with reduced exceptional divisor $E$. Let $Z$ be the complement of the domain of $\pi^{-1}$ and let $c:=\codim_X Z$. Note that $Z$ is torus invariant since $\pi$ is a torus equivariant morphism. Let $V:= M \otimes \mathbb{C}$. Let $\tau_1,\dots,\tau_k$ be the rays of $\sigma$ and let $F_1,\dots,F_k$ be the corresponding torus invariant divisors in $X$. Since $\Sigma$ is a refinement of $\sigma$, the $1$-dimensional cones of $\Sigma$ are $\tau_1,\dots,\tau_k$ along with some additional rays $\tau_{k+1},\dots \tau_r$. Let $D_1,\dots,D_r$ be the corresponding torus invariant divisors in $Y$. Observe that $D_1,\dots,D_k$ are the strict transforms of $F_1,\dots,F_k$.

Consider the exact sequence $\widetilde{C_p}$, as in (\ref{equation:smooth-varieties-residue-sequence}), on $Y$. Observe that $T_{j_1,\dots,j_m}$ is the sum of all the torus invariant divisors of $D_{j_1} \cap \dots \cap D_{j_m}$. Therefore by Remark \ref{remark:trivialization-of-log-differentials-for-toric-varieties} applied to the smooth toric variety $D_{j_1} \cap \dots \cap D_{j_m}$:
\[\Omega^{p-m}_{D_{j_1}\cap \dots \cap D_{j_m}}(\log T_{j_1,\dots,j_m}) = \bigwedge^{p-m}(\tau_{j_1}^\perp \cap \dots \cap \tau_{j_m}^\perp) \otimes \frac{V}{\tau_{j_1}^\perp} \otimes \dots \otimes \frac{V}{\tau_{j_m}^\perp} \otimes \mathcal{O}_{D_{j_1} \cap \dots \cap D_{j_m}}. \]
In Remark \ref{remark:intersection-of-strict-transforms-toric-varieties}, we have seen that either $\pi(D_{j_1} \cap \dots \cap D_{j_m}) = F_{j_1} \cap \dots \cap F_{j_m}$ or $D_{j_1} \cap \dots \cap D_{j_m} = \emptyset$. In the first case, we get:
\begin{align*}
            \pi_*\mathcal{O}_{D_{j_1} \cap \dots \cap D_{j_m}} = \mathcal{O}_{\pi(D_{j_1} \cap \dots \cap D_{j_m})}, \quad R^i\pi_*\mathcal{O}_{D_{j_1} \cap \dots \cap D_{j_m}} = 0 \text{ for all $i>0$}.
        \end{align*}
In the second case as well, the above two conditions continue to hold for trivial reasons. Therefore, $\widetilde{C_p}$ is $\pi_*$-acyclic because
\[R^i\pi_*\Omega^{p-m}_{D_{j_1}\cap \dots \cap D_{j_m}}(\log T_{j_1,\dots,j_m}) = 0 \text{ for } i>0.\]

\begin{definition}\label{definition:residue-complex}
Define the \textbf{residue complex} $C_p$ on $X$ to be:
\[ C_p := \pi_* \widetilde{C_p}.\]
Explicitly, $C_p$ is the complex:
\begin{equation}\label{equation:residue-complex}
\resizebox{\textwidth}{!}{$
\begin{split}
    \bigwedge^p V \otimes \mathcal{O}_X \to \bigoplus_{j=1}^k \left( \bigwedge^{p-1} \tau_j^{\perp} \otimes \frac{V}{\tau_j^\perp} \otimes \mathcal{O}_{\pi(D_j)} \right) \to \bigoplus_{1 \leq j_1<j_2 \leq k} \left( \bigwedge^{p-2} (\tau_{j_1}^\perp \cap \tau_{j_2}^\perp) \otimes \frac{V}{\tau_{j_1}^\perp} \otimes \frac{V}{\tau_{j_2}^\perp} \otimes \mathcal{O}_{\pi(D_{j_1} \cap D_{j_2})} \right) \to \dots\\
    \dots \bigoplus_{1 \leq j_1 < \dots < j_p \leq k} \left( \frac{V}{\tau_{j_1}^\perp} \otimes \dots \otimes \frac{V}{\tau_{j_p}^\perp} \otimes \mathcal{O}_{\pi(D_{j_1}\cap \dots \cap D_{j_p})} \right)\to 0,
\end{split}
$}
\end{equation}
where $\displaystyle \bigwedge^p V \otimes \mathcal{O}_X$ is in cohomological degree $0$.
\end{definition}

\noindent By definition of $C_p$, since $\widetilde{C_p}$ is a $\pi_*$-acyclic resolution of $\Omega^p_Y(\log E)$, we have:
\begin{align*}
        {\bf R}\pi_*\Omega^p_Y(\log E) \simeq_{\mathrm{qis}} C_p.
    \end{align*}
    In particular:
\begin{align*}
    R^i\pi_*\Omega^p_Y(\log E) = H^i(C_p)
\end{align*}

We will now compare the residue complex $C_p$ with the Ishida complex $S_p$ (\ref{equation:ishida-complex}). In Remark \ref{remark:intersection-of-strict-transforms-toric-varieties}, we have seen that either $\pi(D_{j_1} \cap \dots \cap D_{j_m}) = F_{j_1} \cap \dots \cap F_{j_m}$ or $D_{j_1} \cap \dots \cap D_{j_m} = \emptyset$. Thus, we have a natural surjective map $S_p \twoheadrightarrow C_p$ and the kernel consists of factors involving $\mathcal{O}_{F_{j_1} \cap \dots \cap F_{j_m}}$ such that $D_{j_1} \cap \dots \cap D_{j_m} = \emptyset$.
We now make this precise.

\begin{proposition}\label{proposition:kernel-of-ishida-complex}
    For any $p \in \mathbb{N}$, define the complex $K_p := \ker(S_p \twoheadrightarrow C_p)$. Then for any $m \geq 0$, $K_p$ in cohomological degree $m$ is given by:
    \begin{equation}
        (K_p)^m = \bigoplus_{F_{j_1} \cap \dots \cap F_{j_m} \subset Z} \left( \bigwedge^{p-m}(\tau_{j_1}^\perp \cap \dots \cap \tau_{j_m}^\perp) \otimes \frac{V}{\tau_{j_1}^\perp} \otimes \dots \otimes \frac{V}{\tau_{j_m}^\perp} \otimes \mathcal{O}_{F_{j_1}\cap \dots \cap F_{j_m}} \right).
    \end{equation}
\end{proposition}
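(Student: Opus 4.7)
The plan is to compute $K_p$ directly in each cohomological degree by analyzing the surjection $S_p \twoheadrightarrow C_p$ summand by summand. Comparing (\ref{equation:ishida-complex}) and (\ref{equation:residue-complex}), the two complexes agree on the wedge-power and quotient factors, and differ only in the structure sheaf: the $(j_1,\dots,j_m)$ summand carries $\mathcal{O}_{F_{j_1}\cap\dots\cap F_{j_m}}$ in $S_p$ and $\mathcal{O}_{\pi(D_{j_1}\cap\dots\cap D_{j_m})}$ in $C_p$, and the surjection is induced by the natural restriction between these. By the dichotomy in Remark \ref{remark:intersection-of-strict-transforms-toric-varieties}, either (a) $\pi(D_{j_1}\cap\dots\cap D_{j_m}) = F_{j_1}\cap\dots\cap F_{j_m}$, in which case the restriction is the identity and the summand contributes nothing to $K_p$, or (b) $D_{j_1}\cap\dots\cap D_{j_m} = \emptyset$, in which case the target is zero and the entire summand lies in $K_p$.

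Thus the statement reduces to the equivalence
\[ D_{j_1}\cap\dots\cap D_{j_m} = \emptyset \iff F_{j_1}\cap\dots\cap F_{j_m} \subset Z. \]
Set $\tau := \tau_{j_1}+\dots+\tau_{j_m}$. In the simplicial setting of Section \ref{section:proof-of-(1)}, Remark \ref{remark:intersection-of-strict-transforms-toric-varieties} identifies $F_{j_1}\cap\dots\cap F_{j_m}$ with the orbit closure $V(\tau)$ and translates the left-hand side into the combinatorial condition $\tau \notin \Sigma$. So it remains to prove $\tau \notin \Sigma \iff V(\tau) \subset Z$.

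The key input is that $Z$ is closed and torus invariant (since $\pi$ is torus equivariant), hence a union of orbit closures; in particular $V(\tau) \subset Z$ iff $O_\tau \subset Z$. The complement $X \setminus Z$ is by definition the locus where $\pi$ is a local isomorphism, which by standard toric theory equals $\bigcup_{\rho \in \Sigma} U_\rho$, and a given orbit $O_\tau$ lies in $U_\rho$ precisely when $\tau$ is a face of $\rho$. Since faces of a cone in a fan are themselves in the fan, $O_\tau$ lies in the isomorphism locus if and only if $\tau \in \Sigma$, yielding the desired equivalence.

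The main subtlety is this final orbit-face translation, which converts the combinatorial condition $\tau \in \Sigma$ into the geometric containment $V(\tau) \subset Z$; once this is in place, the rest of the argument is simply bookkeeping with the dichotomy from Remark \ref{remark:intersection-of-strict-transforms-toric-varieties} applied summandwise.
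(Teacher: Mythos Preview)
Your proof is correct and follows essentially the same approach as the paper: both reduce to the equivalence $D_{j_1}\cap\dots\cap D_{j_m}=\emptyset \iff F_{j_1}\cap\dots\cap F_{j_m}\subset Z$ and establish it via the dichotomy $\tau\in\Sigma$ versus $\tau\notin\Sigma$ from Remark~\ref{remark:intersection-of-strict-transforms-toric-varieties}. The only cosmetic difference is that you package the key step as the combinatorial description $X\setminus Z=\bigcup_{\rho\in\Sigma\,\cap\,\text{faces}(\sigma)} U_\rho$ (note the union should be restricted to cones of $\Sigma$ that are faces of $\sigma$, so that $U_\rho\subset X$ makes sense), whereas the paper argues each implication directly without stating this identity explicitly.
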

\begin{proof}
    Observe that $(K_p)^m$ precisely consists of factors involving $\mathcal{O}_{F_{j_1} \cap \dots \cap F_{j_m}}$ such that $D_{j_1} \cap \dots \cap D_{j_m} = \emptyset$. Thus, it remains to show that for any $j_1, \dots, j_m$, $D_{j_1} \cap \dots \cap D_{j_m} = \emptyset \iff F_{j_1} \cap \dots \cap F_{j_m} \subset Z$.
    
    For any $F_{j_1} \cap \dots \cap F_{j_m} \subset Z$, let us prove that $D_{j_1} \cap \dots \cap D_{j_m} = \emptyset$. 
    We claim that the face $\tau$ spanned by $\tau_{j_1}, \dots, \tau_{j_m}$ is not a cone in $\Sigma$. For if $\tau$ is a cone in $\Sigma$, then the corresponding affine open subset $U_\tau$ is an open subset of both $X$ and $Y$, and $\pi^{-1}(U_\tau) = U_\tau$, thus making $\pi$ an isomorphism over $U_\tau$. But $U_\tau \cap F_{j_1} \cap \dots \cap F_{j_m} \neq \emptyset$, which contradicts the fact that $Z$ is the smallest closed subset of $X$ outside which $\pi$ is an isomorphism. Thus, $\tau$ is not a cone in $\Sigma$, which implies that $D_{j_1} \cap \dots \cap D_{j_m} = \emptyset$.

    On the other hand, if $F_{j_1} \cap \dots \cap F_{j_m} \not\subset Z$, then $\pi$ is an isomorphism over an open subset of $F_{j_1} \cap \dots \cap F_{j_m}$, hence $D_{j_1} \cap \dots \cap D_{j_m} \neq \emptyset$.
\end{proof}

\noindent As a consequence, we get the following results about $K_p$.
\begin{corollary}\label{corollary:results-about-Kp}
    We have:
    \begin{enumerate}
        \item For $m<c$, we have $(K_p)^m = 0$.
        \item For $p \geq c$, we have $H^c(K_p) \neq 0$.
    \end{enumerate}
\end{corollary}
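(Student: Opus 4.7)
The plan is to prove (1) directly from simpliciality and Proposition~\ref{proposition:kernel-of-ishida-complex}, and to prove (2) by constructing an explicit nonzero cocycle in $(K_p)^c$, using (1) to conclude $(K_p)^{c-1}=0$ and hence $H^c(K_p)=\ker d^c$.

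For (1), the simplicial hypothesis and Remark~\ref{remark:intersection-of-strict-transforms-toric-varieties} imply that $F_J=V(\tau_J)$ is irreducible of codimension exactly $|J|$ in $X$ for every subset $J$ of rays of $\sigma$. If $F_J\subseteq Z$ then $F_J$ is contained in an irreducible component of $Z$, which has codimension $\geq c$, and so $|J|\geq c$. For $m<c$, no summand in the description of Proposition~\ref{proposition:kernel-of-ishida-complex} contributes, and $(K_p)^m=0$.

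For (2), pick an irreducible component $W\subseteq Z$ of codimension exactly $c$; this exists by the definition of $c$. By simpliciality, $W=F_{J_0}=V(\tau_{J_0})$ for some $c$-element subset $J_0=\{j_1,\dots,j_c\}$ of rays of $\sigma$. I will produce a cycle $\xi$ supported in the $J_0$-summand
\[
(K_p)^c_{J_0}=\bigwedge^{p-c}(\tau_{J_0}^\perp)\otimes\bigotimes_{i=1}^{c}(V/\tau_{j_i}^\perp)\otimes\mathcal{O}_{F_{J_0}}.
\]
Take $\xi=\omega\otimes v\otimes g$, where $\omega\in\bigwedge^{p-c}(\tau_{J_0}^\perp)$ is any nonzero element (which exists because $\dim\tau_{J_0}^\perp=n-c\geq p-c$ whenever $p\leq n$), $v$ is a generator of the one-dimensional space $\bigotimes_{i=1}^{c}V/\tau_{j_i}^\perp$, and $g=\chi^u$ for a lattice point $u$ in the relative interior of $\bar\sigma^\vee\subseteq(M\cap\tau_{J_0}^\perp)\otimes\mathbb{R}$, where $\bar\sigma$ denotes the image of $\sigma$ in $N(\tau_{J_0})\otimes\mathbb{R}$. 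Such a $u$ exists by elementary toric geometry and satisfies $u(v_{j_0})>0$ for every $j_0\in\{1,\dots,k\}\setminus J_0$; this forces $g|_{F_{J_0\cup\{j_0\}}}=0$ for every such $j_0$.

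The verification that $d\xi=0$ is the main point. The Ishida-type differential inherited on $K_p$ sends the $J_0$-summand into $\bigoplus_{j_0\notin J_0}(K_p)^{c+1}_{J_0\cup\{j_0\}}$, since $F_{J_0\cup\{j_0\}}\subseteq F_{J_0}\subseteq Z$ places each such summand in $K_p$ by Proposition~\ref{proposition:kernel-of-ishida-complex}. In each $(J_0\cup\{j_0\})$-component the image of $\xi$ is a tensor product carrying the factor $g|_{F_{J_0\cup\{j_0\}}}=0$, so $d\xi=0$ and $[\xi]\in H^c(K_p)=\ker d^c$ is a nonzero class. The main obstacle is writing down the differential on $K_p$ explicitly enough to see the $g|_{F_{J_0\cup\{j_0\}}}$ factor in each component; this can be traced from the toric residue sequence of Section~\ref{section:complexes-on-toric-varieties} together with the short exact sequence $0\to\tau_{J_0\cup\{j_0\}}^\perp\to\tau_{J_0}^\perp\to V/\tau_{j_0}^\perp\to 0$, which controls the wedge-residue part.
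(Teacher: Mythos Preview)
Your proof is correct and follows essentially the same approach as the paper. For (1) both arguments use that simpliciality forces $\codim_X F_J=|J|$, and for (2) both select a $c$-element index set $J_0$ with $F_{J_0}\subseteq Z$ and a character $\chi^u$ with $u$ in the relative interior of $\bar\sigma^\vee$ (equivalently, $u\in\sigma^\vee\cap\tau_{J_0}^\perp$ but $u\notin\tau_l^\perp$ for $l\notin J_0$); the only cosmetic difference is that the paper phrases the conclusion via the $M$-grading (showing $((K_p)^{c+1})_u=0$ directly), whereas you verify $d\xi=0$ component-by-component using the vanishing of the restrictions $g|_{F_{J_0\cup\{j_0\}}}$.
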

\begin{proof}
    For (1), observe that $\codim_X Z = c$ and so for $m<c$ and any $j_1,\dots,j_m$, we have $F_{j_1} \cap \dots \cap F_{j_m} \not\subset Z$. Therefore $(K_p)^m = 0$ for such $m$.

    For (2), first observe that $Z$ is a torus invariant subset of $X$ of codimension $c$. Hence by Remark \ref{remark:intersection-of-strict-transforms-toric-varieties}, we can find a $j_1,\dots,j_c$ such that $F_{j_1} \cap \dots \cap F_{j_c} \subset Z$. This implies that $(K_p)^c$ contains a factor involving $\mathcal{O}_{F_{j_1} \cap \dots \cap F_{j_c}}$. Now observe that $K_p$ is an $M$-graded complex. Thus, if we take
    \[ u \in (\tau_{j_1}^\perp \cap \dots \cap \tau_{j_c}^\perp \cap M) \setminus \bigcup_{l \notin \{ j_1,\dots,j_c \} } \tau_{j_1}^\perp \cap \dots \cap \tau_{j_c}^\perp \cap \tau_l^\perp  \]
    and look in degree $u$, then $((K_p)^c)_u \neq 0$ while $((K_p)^{c+1})_u = 0$. Therefore the map $(K_p)^c \to (K_p)^{c+1}$ is not injective as required.
\end{proof}

\begin{proof}[Proof of (1) in Theorem \ref{theorem:vanishing-in-terms-of-codim}]
Let us first prove (a). Assume $p < c$. By Corollary \ref{corollary:results-about-Kp}, we have that $(K_p)^m = 0$ for all $m \leq p$. Hence, $S_p \cong C_p$ and so, $C_p$ is exact at all cohomological degrees other than $0$. Therefore we have
\[ R^i\pi_*\Omega^p_Y(\log E) = 0 \text{ for $i>0$}. \]

We will now prove (b). Assume $p \geq c$. By Corollary \ref{corollary:results-about-Kp}, we have the following short exact sequence of complexes:
\[
\begin{tikzcd}
    \dots \ar[r] & 0 \ar[r] \ar[d]& (K_p)^c \ar[r] \ar[d, hook] & (K_p)^{c+1} \ar[r] \ar[d, hook] & \dots \ar[r] & (K_p)^p \ar[r] \ar[d, hook] & 0\\
    \dots \ar[r] & (S_p)^{c-1} \ar[r] \ar[d, equal]& (S_p)^c \ar[r] \ar[d, twoheadrightarrow] & (S_p)^{c+1} \ar[r] \ar[d, twoheadrightarrow] & \dots \ar[r] & (S_p)^p \ar[r] \ar[d, twoheadrightarrow] & 0\\
    \dots \ar[r] & (C_p)^{c-1} \ar[r] & (C_p)^c \ar[r] & (C_p)^{c+1} \ar[r] & \dots \ar[r] & (C_p)^p \ar[r] & 0.
\end{tikzcd}
\]
Since $S_p$ is exact at all cohomological degrees other than $0$, we get
\[ R^i\pi_*\Omega^p_Y(\log E) = H^i(C_p) = H^{i+1}(K_p) \text{ for $i>0$}. \]
Thus, we conclude that
\begin{align*}   
        &R^i\pi_*\Omega^p_{Y}(\log E) = 0 \text{, for $0< i<c-1$ and $i>p-1$,}\\
        &R^{c-1}\pi_*\Omega^p_{Y}(\log E) = H^c(K_p) \neq 0,
        \end{align*}
where the second statement follows from Corollary \ref{corollary:results-about-Kp}(2).
\end{proof}

We end with a remark regarding the higher direct images $R^i\pi_*\Omega^p_{Y}(\log E)$ when $X$ is non-simplicial.
\begin{remark}
    To summarize, the way we deduce vanishing and non-vanishing in the simplicial case is as follows. We have two complexes on $X$:
    \begin{enumerate}
        \item the residue complex $C_p$ (\ref{equation:residue-complex}) which is equal to $\mathbf{R}\pi_*\Omega^p_Y(\log E)$,
        \item the Ishida complex $S_p$ (\ref{equation:ishida-complex}) which we know has vanishing higher cohomologies.
    \end{enumerate}
    Thus, to deduce vanishing and non-vanishing of the cohomologies of $C_p$, we compare $C_p$ to $S_p$ and draw appropriate conclusions.
    
    In the non-simplicial case, we still have the complexes $C_p$ and $S_p$ and we can compare the two, but the fundamental issue is that the Ishida complex $S_p$ does not necessarily have vanishing higher cohomologies. Thus in the non-simplicial case, we need a deeper understanding of the cohomologies of the Ishida complex $S_p$ to deduce precise statements about vanishing and non-vanishing of the higher direct images $R^i\pi_*\Omega^p_{Y}(\log E)$.
\end{remark}

\textbf{Data availability.} Data sharing is not applicable to this article as no datasets were generated
or analyzed during the current study.

\textbf{Conflict of interest.} The authors have no relevant financial or non-financial interests to disclose.

\bibliographystyle{alpha}
\bibliography{reference}

\Addresses
\end{document}